\newfont{\theoremfont}{cmssbx12 scaled 875}
\newfont{\thminlinefont}{cmti12 scaled 850}
\newtheoremstyle{PLAIN}{\topsep}{\topsep}{}{}{\theoremfont}{.}{5pt}{\thmname{#1}\thmnumber{ #2}\thmnote{ #3}}
\theoremstyle{PLAIN}
\newtheorem*{thmA}{Theorem A}
\newtheorem*{thmB}{Theorem B}
\newtheorem{thm}{Theorem}[section]
\newtheorem{cor}[thm]{Corollary}
\newtheorem{lem}[thm]{Lemma}
\newtheorem{ex}[thm]{Example}
\newtheorem{rem}[thm]{Remark}
\newcommand{\ind}[1]{\operatorname{ind}_{#1}\nolimits}
\newcommand{\proj}[1]{\operatorname{proj}_{#1}\nolimits}
\newcommand{\Bigsum}[2]{\ensuremath{\mathop{\textstyle\sum}_{#1}^{#2}}}
\newcommand{\Bigcup}[1]{\ensuremath{\mathop{\cup}_{#1}}}
\newcommand{\Bigcap}[1]{\ensuremath{\mathop{\cap}_{#1}}}
\renewcommand{\Re}{\operatorname{Re}\nolimits}
\newcommand{\id}{I}
\newcommand{\gr}{\operatorname{gr}\nolimits}
\newcommand{\abs}{\operatorname{abs}\nolimits}
\newcommand{\seq}{\operatorname{seq}\nolimits}
\newcommand{\hatt}{\scriptscriptstyle\wedge}
\renewcommand{\epsilon}{\varepsilon}
\newcommand{\spacea}{\hspace{1.2pt}}
\begin{document}
\setlength{\parskip}{4pt}
\bibliographystyle{amsplain}
\title{Universal extrapolation spaces\\for C$_{\text{0}}$-semigroups}
\author{Sven-Ake Wegner\,$^{1}$}
\date{September 3, 2013}

\maketitle
\renewcommand{\thefootnote}{}
\hspace{-1000pt}\footnote{2010 \emph{Mathematics Subject Classification}: Primary 47D06; Secondary 47D03, 46A13.}
\hspace{-1000pt}\footnote{\emph{Key words and phrases}: semigroup, extrapolation space, interpolation space, So\-bo\-lev tower.}
\hspace{-1000pt}\footnote{$^{1}$\vspace{1pt}Bergische Universit\"at Wuppertal, FB C -- Mathematik, Gau\ss{}stra\ss{}e 20, D-42119 Wuppertal, Germany, Phone:\spacea{}\spacea{}+49\spacea{}(0)\spacea{}202\spacea{}/\spacea{}439\spacea{}-\spacea{}2531, Fax:\spacea{}\spacea{}+49\spacea{}(0)\spacea{}202\spacea{}/\spacea{}439\spacea{}-\spacea{}3771, eMail: wegner@math.uni-wuppertal.de.}

\vspace{-30pt}

\begin{abstract}
The classical theory of Sobolev towers allows for the construction of an infinite ascending chain of extrapolation spaces and an infinite descending chain of interpolation spaces associated with a given $C_0$-semigroup on a Banach space. In this note we first generalize the latter to the case of a strongly continuous and exponentially equicontinuous semigroup on a complete locally convex space. As a new concept -- even for $C_0$-semigroups on Banach spaces -- we then define a universal extrapolation space as the completion of the inductive limit of the ascending chain. Under mild assumptions we show that the semigroup extends to this space and that it is generated by an automorphism of the latter. Dually, we define a universal interpolation space as the projective limit of the descending chain. We show that the restriction of the initial semigroup to this space is again a semigroup and always has an automorphism as generator.
\end{abstract}

\setlength{\parskip}{12pt}

%%%%%%%%%%%%%%%%%%%%%%%%%%%%%%%%%%%%%%%%%%%%%%%%%%%%%%%%%%%%%%%%%%%%%%%%%%%%%%%%%%%%%%%
%                                                                                     %
%  1 Introduction                                                                     %
%                                                                                     %
%%%%%%%%%%%%%%%%%%%%%%%%%%%%%%%%%%%%%%%%%%%%%%%%%%%%%%%%%%%%%%%%%%%%%%%%%%%%%%%%%%%%%%%
\section{Introduction}\label{Intro}
\vspace{-10pt}
Extrapolation spaces constitute an important technique arising in the theory of evolution equations. Headed by its prominent prototype, the classical Sobolev space $H^{-1}$, extrapolation spaces appear naturally in many situations such as the theory of boundary perturbations, cf.~the survey \cite{Nagel} of Nagel and its list of references, or in the construction of rigged Hilbert spaces, see e.g.~Triebel \cite{Triebel}. Abstract approaches can be found in Engel, Nagel \cite[II.5]{EngelNagelOne} or van Neerven \cite{vanNeervenBook}; they are based on the key notion of so-called Sobolev towers, which provide a method to construct an infinite chain of extrapolation spaces associated with a semigroup acting on a given Banach space.
\smallskip
\\The aim of this paper is twofold. In Section \ref{S1} we generalize the construction of Sobolev towers from the classical setting of $C_0$-semigroups on Banach spaces to those of a strongly continuous and exponentially equicontinuous resp.~quasi-equicontinuous semigroup in the sense of Albanese, Bonet, Ricker \cite{ABR,ABR13} resp.~Choe \cite{Choe} defined on a complete locally convex space. Our purpose is to facilitate the methods of extrapolation spaces in this larger framework for applications in perturbation theory. This is at the moment investigated by Wintermayr \cite{JW}. Let us mention that various notions and results from interpolation theory have been successfully generalized to the setting of locally convex spaces since the mid eighties, see e.g.~Lamb \cite{Lamb}, Lamb, McBride \cite{LambMcBride}, Lamb, Schiavone \cite{SchiavoneLamb}, Mart{\'{\i}}nez, Sanz, Calvo \cite{MSC} or Mart{\'{\i}}nez, Sanz \cite{MS}. Section \ref{Extra} is devoted to a rigorous treatment of what we like to call 
universal extrapolation spaces. Given a Sobolev tower, i.e.~an infinite ascending chain of spaces with continuous inclusion maps and semigroups acting consistently on each of these spaces, we consider the completion of the inductive limit. We show that the semigroups on the steps induce a semigroup with continuous generator on the latter space which extends to an automorphism of the whole space. Both, semigroup and generator, are compatible with the corresponding maps on each step of the tower. Under an additional barrelledness assumption we show that the domain of the generator even coincides with the whole space. We accomplish our investigation in Section \ref{Inter} by a short discussion of the dual construction, namely the projective limit of the descending chain of interpolation spaces. Throughout the article we give examples from the world of sequence spaces. For the convenience of the reader we state our main results on universal extrapolation and interpolation spaces in the setting of a $C_0$-
semigroup acting on a Banach space in Section \ref{S0} right below.
\smallskip
\\Let us mention that Haase \cite[p.~221ff]{Haase} recently used an approach similar to ours, but in a different setting. He considered extrapolation spaces w.r.t.~an injective and continuous operator on a Banach space and defined the corresponding universal extrapolation space as their algebraic inductive limit. On this linear space Haase then defined a notion of net convergence customized for his purposes in \cite{Haase}.
\smallskip
\\For our notation on semigroups and locally convex spaces we refer to the textbooks of Engel, Nagel \cite{EngelNagelOne}, Jarchow \cite{Jarchow}, Meise, Vogt \cite{MeiseVogtEnglisch} and Bonet, P\'{e}rez Carreras \cite{BPC}. Throughout the paper all locally convex spaces are tacitly assumed to be Hausdorff. The definition of an inductive limit thus always includes the assumption that this space is again Hausdorff. In concrete cases this can often be checked directly; for instance the latter holds in our case if there exists some locally convex space that contains all extrapolation spaces with continuous inclusions.

%%%%%%%%%%%%%%%%%%%%%%%%%%%%%%%%%%%%%%%%%%%%%%%%%%%%%%%%%%%%%%%%%%%%%%%%%%%%%%%%%%%%%%%
%                                                                                     %
%  2 Main Result                                                                      %
%                                                                                     %
%%%%%%%%%%%%%%%%%%%%%%%%%%%%%%%%%%%%%%%%%%%%%%%%%%%%%%%%%%%%%%%%%%%%%%%%%%%%%%%%%%%%%%%
\vspace{-5pt}
\section{The Main Results}\label{S0}
\vspace{-10pt}
Let $X_0$ be a Banach space and $(T_0(t))_{t\geqslant0}$ be a strongly continuous semigroup with generator $A_0$ and assume that $0\in\rho(A_0)$ holds. Let $n\geqslant1$. For $x\in D(A^n)$ we put $\|x\|_n=\|A^nx\|_0$ and $X_n=(D(A^n),\|\cdot\|_n)$. For $x\in X_{-n+1}$ we define recursively $\|x\|_{-n}=\|A_{-n+1}^{-1}x\|_{-n+1}$ and put $X_{-n}=(X_{-n+1},\|\cdot\|_{-n})^{\hatt}$. By $(T_n(t))_{t\geqslant0}$ we denote the restriction resp.~extension of the initial semigroup to $X_n$. By $A_n$ we denote the corresponding generators. We refer to \cite[II.5]{EngelNagelOne} for a detailed study of these so-called \emph{inter-} and \emph{extrapolation spaces of order $n$} in the Banach space case.
\smallskip
\\Remember, that the inductive limit of the spectrum $(X_{-n})_{n\in\mathbb{N}}$ is the union of all $X_{-n}$ endowed with the finest linear topology which makes the inclusions $X_{-n}\rightarrow\Bigcup {k\in\mathbb{N}}X_{-k}$ continuous. The projective limit of the spectrum $(X_n)_{n\in\mathbb{N}}$ is the intersection of all $X_n$ endowed with the coarsest topology which makes all inclusions $\Bigcap{k\in\mathbb{N}}X_k\rightarrow X_n$ continuous.

\begin{thmA}\label{thmA} Assume that the inductive limit $\ind{n\in\mathbb{N}}X_{-n}$ of all extrapolation spaces is Hausdorff and let $X_{-\infty}$ be its completion. Then $X_{-\infty}$ is a complete barrelled locally convex space. The semigroups $T_n$ induce a strongly continuous semigroup $T_{-\infty}$ on $X_{-\infty}$ whose generator $A_{-\infty}\colon X_{-\infty}\rightarrow X_{-\infty}$ is an isomorphism. For $t\geqslant0$ the maps $T_{-\infty}(t)$ resp.~$A_{-\infty}$ are extensions of $T_{-n}(t)$ resp.~$A_{-n}$ for every $n\geqslant1$. We call $X_{-\infty}$ the \emph{universal extrapolation space} associated with the semigroup $(T_0(t))_{t\geqslant0}$.
\end{thmA}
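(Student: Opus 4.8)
The plan is to move everything from the inductive limit $Y:=\ind{n\in\mathbb{N}}X_{-n}$, which sits densely in its completion $X_{-\infty}$, using two structural features of the tower. First, for every $k$ the generator $A_{-k}$ restricts to an isometric isomorphism $X_{-k+1}\to X_{-k}$, so that the single shift operator $\mathcal{A}$ on $Y$ given by $\mathcal{A}x:=A_{-n-1}x$ for $x\in X_{-n}$ is well defined and consistent across the steps. Second, the semigroups $T_{-n}$ are mutually consistent and carry a \emph{uniform} exponential bound $\|T_{-n}(t)x\|_{-n}\leqslant Me^{\omega t}\|x\|_{-n}$ with $M$ and $\omega$ independent of $n$, since each $X_{-n}$ is isometrically isomorphic to $X_0$ in a way that intertwines $T_{-n}$ with $T_0$. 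I would first dispatch the topological assertion: $Y$ is a countable inductive limit of Banach spaces, hence barrelled, and the completion of a barrelled space is again barrelled (see \cite{BPC}); as $X_{-\infty}$ is by definition a completion, it is complete. This gives the first sentence.

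For the semigroup I would define $T_{-\infty}(t)$ on $Y$ by $T_{-\infty}(t)x:=T_{-n}(t)x$ for $x\in X_{-n}$; consistency makes this unambiguous, and by the universal property of the inductive limit the map is continuous on $Y$ because its restriction to each step is. Being continuous it extends uniquely to $X_{-\infty}$ by density, and the semigroup law together with $T_{-\infty}(0)=\id$ transfers from $Y$ to $X_{-\infty}$. To obtain strong continuity on the completion I would establish \emph{local equicontinuity} of $\{T_{-\infty}(t):t\in[0,t_0]\}$: a basic neighbourhood of $0$ in $Y$ may be taken of the form $\Gamma=\overline{\operatorname{conv}}\big(\bigcup_n\lambda_nB_{-n}\big)$ with $B_{-n}$ the unit ball of $X_{-n}$ and $\lambda_n>0$, and the uniform bound together with linearity and continuity yields $T_{-\infty}(t)\Gamma\subseteq Me^{\omega t_0}\Gamma$ for $t\in[0,t_0]$, so that $(Me^{\omega t_0})^{-1}\Gamma$ is carried into $\Gamma$. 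Equicontinuity, combined with strong continuity on the dense subspace $Y$ (which holds step by step), then yields strong continuity on $X_{-\infty}$ via the usual three-neighbourhood estimate. The identity $T_{-\infty}(t)|_{X_{-n}}=T_{-n}(t)$ is built into the construction.

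For the generator I would first verify that $\mathcal{A}\colon Y\to Y$ is a topological isomorphism: each $A_{-n-1}$ maps $X_{-n}$ isometrically onto $X_{-n-1}$, so $\mathcal{A}$ is a continuous bijection of $Y$ -- surjective because the chain is increasing, injective because it is injective on each step -- whose inverse, acting on $X_{-m}$ as $A_{-m}^{-1}\colon X_{-m}\to X_{-m+1}$, is again continuous step by step. Hence its continuous extension $A_{-\infty}$ is an automorphism of $X_{-\infty}$. To see that $A_{-\infty}$ is the generator $G$ of $T_{-\infty}$, note that for $x\in X_{-n}$ we have $x\in D(A_{-n-1})$, so $t^{-1}(T_{-n-1}(t)x-x)\to A_{-n-1}x=\mathcal{A}x$ in $X_{-n-1}$ and therefore in $X_{-\infty}$; thus $Y\subseteq D(G)$ and $G|_Y=\mathcal{A}$. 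Since $G$ is closed, its graph contains the closure of the graph of $\mathcal{A}|_Y$, which -- because $Y$ is dense and $A_{-\infty}$ continuous -- is exactly the graph of $A_{-\infty}$; as $A_{-\infty}$ is everywhere defined this forces $G=A_{-\infty}$. Finally $A_{-\infty}|_{X_{-n+1}}=A_{-n}$ is immediate from the definition of $\mathcal{A}$.

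I expect the two genuine obstacles to be analytic rather than algebraic. The first is the local equicontinuity of the induced semigroup: it is the engine behind the passage from $Y$ to $X_{-\infty}$ of strong continuity, and it is precisely the point where the uniform exponential bound is indispensable. The second is the identification of the generator, where one must combine the closedness of $G$ with the graph-closure argument to conclude that the generator is the \emph{everywhere defined} isomorphism $A_{-\infty}$ and not merely a restriction of it. By contrast, barrelledness and the consistency statements are comparatively routine.
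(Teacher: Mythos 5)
Your proposal is correct, and on most points it runs parallel to the paper's proof of Theorem \ref{t2}, of which Theorem A is the Banach special case: barrelledness of $X_{-\infty}$ via ``countable inductive limit of Banach spaces is barrelled'' plus ``completions of barrelled spaces are barrelled'' (\cite[4.2.1, 4.2.6]{BPC}), equicontinuity of the induced semigroup via absolutely convex hulls of the step balls (the paper uses $\abs(\Bigcup{n}V_n)$ and the uniform $\omega$ coming from the similarity $T_{n+1}(t)=A_n^{-1}T_n(t)A_n$ of Corollary \ref{c1}, which in the Banach normalization is indeed isometric, as you claim), strong continuity transferred to the completion by density, $A_{-\infty}$ an automorphism as the extension of the step-wise shift with step-wise continuous inverse, and $Y\subseteq D(G)$ with $G|_Y=\mathcal{A}$. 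The genuine divergence is the final step $D(G)=X_{-\infty}$. The paper proves this via Lemma \ref{l3}: it first invokes Choe's generation theorem \cite{Choe} to see that the everywhere-defined continuous extension generates \emph{some} semigroup $R$, and then shows $R=T_{-\infty}$ by differentiating $s\mapsto R(t-s)T_{-\infty}(s)x$ and integrating, which requires the locally convex product rule of Lemma \ref{l2} and hence the Banach--Steinhaus theorem on the barrelled space $X_{-\infty}$. You instead argue directly: $G$ is closed, $Y$ is dense, $A_{-\infty}$ is continuous, hence $\gr(A_{-\infty})=\overline{\gr(\mathcal{A}|_Y)}\subseteq\gr(G)$, forcing $G=A_{-\infty}$. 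This is valid, with one point you should make explicit: closedness of $G$ on the non-normable complete space $X_{-\infty}$ is not the Banach textbook fact but its locally convex version; it follows from the local equicontinuity you established together with completeness, via the representation $T_{-\infty}(t)x-x=\int_0^t T_{-\infty}(s)Gx\,ds$ (K\={o}mura \cite[Section 1]{Komura} -- the paper itself freely uses closedness of generators in this generality, e.g.\ in step (3) of the proof of Theorem \ref{t1}). Once this is cited, your route is shorter than the paper's and, notably, uses barrelledness only for the topological assertion that $X_{-\infty}$ is barrelled, not for the generator identification; it would therefore even remove the barrelledness hypothesis from Theorem \ref{t2}.(ii) in the general locally convex setting. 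What the paper's longer detour through Lemmas \ref{l2} and \ref{l3} buys in exchange is a statement of independent interest: a uniqueness/generation result for semigroups whose generators extend continuously to the whole space.
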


\begin{thmB}\label{thmB} Let $X_{\infty}=\proj{n\in\mathbb{N}}X_n$ be the projective limit of all interpolation spaces. Then $X_{\infty}$ is a Fr\'{e}chet space. The semigroups $T_n$ induce a strongly continuous semigroup $T_{\infty}$ on $X_{\infty}$ whose generator $A_{\infty}\colon X_{\infty}\rightarrow X_{\infty}$ is an isomorphism. For $t\geqslant0$ the maps $T_{\infty}(t)$ resp.~$A_{\infty}$ are restrictions of $T_n(t)$ resp.~$A_n$ for every $n\geqslant1$. We call $X_{\infty}$ the \emph{universal interpolation space} associated with the semigroup $(T_0(t))_{t\geqslant0}$.
\end{thmB}

\vspace{-10pt}

In the sequel we first generalize the construction of inter- and extrapolation spaces of finite order to the setting of locally convex spaces. Then, see Theorem's \ref{t2} and \ref{t3}, we prove results which contain the above statements as special cases.

%%%%%%%%%%%%%%%%%%%%%%%%%%%%%%%%%%%%%%%%%%%%%%%%%%%%%%%%%%%%%%%%%%%%%%%%%%%%%%%%%%%%%%%
%                                                                                     %
%  3 Sobolev Towers on Locally Convex Spaces                                          %
%                                                                                     %
%%%%%%%%%%%%%%%%%%%%%%%%%%%%%%%%%%%%%%%%%%%%%%%%%%%%%%%%%%%%%%%%%%%%%%%%%%%%%%%%%%%%%%%
\vspace{-5pt}
\section{Sobolev Towers on Locally Convex Spaces}\label{S1}
\vspace{-10pt}
Let $(X_0,\tau_0)$ be a complete locally convex space. We assume that the topology $\tau_0$ is given by the system $(p_{\gamma,0})_{\gamma\in\Gamma}$ of seminorms. Let $T_0=(T_0(t))_{t\geqslant0}$ be a strongly continuous and exponentially equicontinuous semigroup on $X_0$. That is, $T_0\colon [0,\infty[\,\rightarrow L(X_0)$ is a map which satisfies the evolution property $T_0(t+s)=T_0(t)T_0(s)$ for $s$, $t\geqslant0$, $T_0(0)=\id_{X_0}$, $T_0(\cdot)x\colon [0,\infty[\,\rightarrow X_0$ is continuous for every $x\in X_0$ and there exists $\omega\in\mathbb{R}$ such that  $\{\exp(\omega t)T_0(t)\:;\:t\geqslant0\}\subseteq L(X_0)$ is equicontinuous. Explicitly, the latter  means that
$$
\exists\:\omega\in\mathbb{R}\;\forall\:\alpha\in\Gamma\;\exists\:\beta\in\Gamma,\,M\geqslant1\;\forall\:t\geqslant0,\,x\in X_0\colon p_{\alpha,0}(T_0(t)x)\leqslant M\,e^{\omega t} p_{\beta,0}(x)
$$
holds. We denote by $(A_0,D(A_0))$ the generator of $T_0$, i.e.
$$
A_0x=\lim_{t\searrow0}{\textstyle\frac{T_0(t)x-x}{t}} \;\text{ for }\; x\in D(A_0)=\{x\in X_0\:;\:\lim_{t\searrow0}{\textstyle\frac{T_0(t)x-x}{t}}\text{ exists}\}.
$$
We refer to K\={o}mura \cite[Section 1]{Komura} for the basic properties of $A_0$, which even hold under weaker assumptions than we required above. In the sequel we assume that $A_0\colon D(A_0)\rightarrow X_0$ is bijective and $A_0^{-1}\colon X_0\rightarrow X_0$ is continuous. As in the Banach space case (cf.~\cite[Remarks previous to Definition II.5.1]{EngelNagelOne}) the latter can always be realized by a rescaling procedure.

\begin{lem}\label{rescaling} Let $(X,\tau)$ be a complete locally convex space and $T=(T(t))_{t\geqslant0}$ be a strongly continuous and exponentially equicontinuous semigroup on $X$ with generator $(A,D(A))$. For $\lambda\in\mathbb{C}$ consider $S=(S(t))_{t\geqslant0}$ with $S(t)=\exp(\lambda t)T(t)$ for $t\geqslant0$. Then $S$ defines a strongly continuous and exponentially equicontinuous semigroup with generator $(B,D(B))$ where $B=A+\lambda$ and $D(B)=D(A)$.
\end{lem}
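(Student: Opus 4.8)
The plan is to carry the classical Banach space rescaling argument (cf.\ \cite[Remarks previous to Definition II.5.1]{EngelNagelOne}) over to the locally convex setting, verifying at each step that only strong continuity and exponential equicontinuity, together with the joint continuity of scalar multiplication in a topological vector space, are used. First I would dispose of the algebraic and topological semigroup properties, which are immediate: the evolution property follows from $S(t+s)=\exp(\lambda(t+s))T(t+s)=\exp(\lambda t)\exp(\lambda s)T(t)T(s)=S(t)S(s)$, and $S(0)=\exp(0)T(0)=\id_X$. For strong continuity, fix $x\in X$; since $t\mapsto\exp(\lambda t)$ is a continuous scalar function and $t\mapsto T(t)x$ is continuous by hypothesis, the product $t\mapsto S(t)x$ is continuous by joint continuity of scalar multiplication.

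Next I would establish exponential equicontinuity. Choose $\omega\in\mathbb{R}$ with $\{\exp(\omega t)T(t)\:;\:t\geqslant0\}$ equicontinuous and put $\omega'=\omega-\Re\lambda$, which is again real. Writing $\exp(\omega' t)S(t)=c(t)\,\exp(\omega t)T(t)$ with $c(t)=\exp((\omega'+\lambda-\omega)t)$, the choice of $\omega'$ yields $|c(t)|=\exp((\omega'+\Re\lambda-\omega)t)=1$ for all $t\geqslant0$. Hence for every continuous seminorm $p$ on $X$ one has $p(\exp(\omega' t)S(t)x)=p(\exp(\omega t)T(t)x)$, so $\{\exp(\omega' t)S(t)\:;\:t\geqslant0\}$ inherits equicontinuity from $\{\exp(\omega t)T(t)\:;\:t\geqslant0\}$. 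This is the one place where the complex nature of $\lambda$ must be handled, namely by passing to $\Re\lambda$ so that the residual scalar factor is unimodular.

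The heart of the lemma is the identification of the generator, and I would base it on the algebraic splitting
$$
\frac{S(t)x-x}{t}=\exp(\lambda t)\,\frac{T(t)x-x}{t}+\frac{\exp(\lambda t)-1}{t}\,x \quad(x\in X,\ t>0),
$$
noting that $\exp(\lambda t)\to1$ and $\frac{\exp(\lambda t)-1}{t}\to\lambda$ as $t\searrow0$, both as scalar limits. If $x\in D(A)$, then $\frac{T(t)x-x}{t}\to Ax$, and joint continuity of scalar multiplication gives $\lim_{t\searrow0}\frac{S(t)x-x}{t}=Ax+\lambda x$, so $x\in D(B)$ and $Bx=(A+\lambda)x$. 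Conversely, solving the splitting for the $T$-difference quotient yields
$$
\frac{T(t)x-x}{t}=\exp(-\lambda t)\Big(\frac{S(t)x-x}{t}-\frac{\exp(\lambda t)-1}{t}\,x\Big),
$$
whence convergence of $\frac{S(t)x-x}{t}$ forces convergence of the right-hand side and thus $x\in D(A)$. Running the limit in both directions proves $D(B)=D(A)$ and $B=A+\lambda$ simultaneously.

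I do not expect a genuine obstacle: the argument is structurally identical to the Banach space case, and completeness of $X$ is not even needed. The only two points requiring care are the passage to $\Re\lambda$ in the equicontinuity estimate and the repeated appeal to joint continuity of scalar multiplication to combine a convergent scalar factor with a convergent net of vectors. The latter is precisely the locally convex substitute for the norm identity $\|S(t)x\|=|\exp(\lambda t)|\,\|T(t)x\|$ used in the classical proof, and it is available for free in any topological vector space.
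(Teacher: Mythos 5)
Your proof is correct and follows essentially the same route as the paper's: the identical splitting $\frac{S(t)x-x}{t}=\exp(\lambda t)\frac{T(t)x-x}{t}+\frac{\exp(\lambda t)-1}{t}\,x$, run in both directions using joint continuity of scalar multiplication to identify $D(B)=D(A)$ and $B=A+\lambda$. The only difference is one of detail, not of method: the paper dismisses the semigroup property, strong continuity and exponential equicontinuity with a single ``Clearly'', whereas you spell them out --- and your unimodular-factor argument with $\omega'=\omega-\Re\lambda$, as well as your observation that completeness of $X$ is never actually used, are both correct.
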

\vspace{-20pt}
\begin{proof} Clearly, $S$ satisfies the semigroup property, is strongly continuous and also exponentially equicontinuous. Let $(B,D(B))$ be the generator of $S$. For $x\in X$ and $t>0$ we have
\begin{equation*}
{\textstyle\frac{S(t)x-x}{t}}={\textstyle\frac{\exp(\lambda t)T(t)x-x}{t}}=\exp(\lambda t){\textstyle\frac{T(t)x-x}{t}} + {\textstyle\frac{\exp(\lambda t)-1}{t}x},
\end{equation*}
where for $t\searrow0$ the second summand on the right hand side converges to $\lambda x$ for any $x\in X$. If $x$ belongs to $D(A)$ the first summand on the right hand side converges to $Ax$ as the scalar multiplication $\mathbb{C}\times X\rightarrow X$ is continuous. Whence $\lim_{t\searrow0}\frac{S(t)x-x}{t}$ exists, $x\in D(B)$ holds and we have $Bx=Ax+\lambda x$. For $x\in D(B)$ the left hand side above converges to $Bx$ and hence $Bx-\lambda x=\lim_{t\searrow0}\exp(\lambda t)\frac{T(t)x-x}{t}$ exists in $X$. Using a second time that scalar multiplication (with $\exp(-\lambda t)$) is continuous, we obtain that $\lim_{t\searrow0}\frac{T(t)x-x}{t}$ exists. Thus, $x\in D(A)$ and $Ax=Bx-\lambda x$ follow.
\end{proof}

\vspace{-10pt}
For $A_0$ as above there exists $\lambda\in\mathbb{C}$ such that $(A_0+\lambda)^{-1}$ exists and belongs to $L(X_0)$, see \cite[Corollary 4.5 and condition (G) on p.~295]{Choe}. By Lemma \ref{rescaling}, $(B_0,D(B_0))=(A_0+\lambda,D(A_0))$ is the generator of $S_0=(\exp(\lambda t)T_0(t))_{t\geqslant0}$.
\medskip
\\For any $n\in\mathbb{Z}$ we now construct inductively a complete locally convex space $(X_n,\tau_n)$ together with a strongly continuous and exponentially equicontinuous semigroup $T_n=(T_n(t))_{t\geqslant0}$ with generator $(A_n,D(A_n))$. Our results and their proofs are inspired by the theory of extrapolation spaces as presented e.g.~in \cite[II.5]{EngelNagelOne} and rely on the approach of Nagel \cite{NagelTuebingen}, cf.~\cite[remarks on p.~155]{EngelNagelOne}. In the case of Banach spaces, Da Prato, Grisvard \cite{DaPratoGrisvard, DaPratoGrisvard2} gave an alternative definition which does not involve the formation of a completion. However, if we start with a $C_0$-semigroup, both constructions yield isomorphic extrapolation spaces. We refer to \cite[Chapter 3]{vanNeervenBook} for details and remark that it is not clear to us if an adaption of the definition $X^{-1}=(X_0\times X_0)/\gr(A_0)$ of Da Prato, Grisvard (see \cite[p.~41]{vanNeervenBook}) is possible in a straight forward way in our locally 
convex setting -- in particular as the right hand side of the last equation is a priori not complete.
\smallskip
\\For $n\geqslant1$ we put $X_n=D(A_{n-1})$ and define $\tau_n$ to be the topology given by the system $(p_{\gamma,n})_{\gamma\in\Gamma}$ of seminorms with $p_{\gamma,n}(x)=p_{\gamma,n-1}(A_{n-1}x)$ for $x\in X_n$. For $t\geqslant0$ the restriction $T_n(t)=T_{n-1}(t)|_{X_n}\colon X_n\rightarrow X_n$ is well-defined; we put $T_n=(T_n(t))_{t\geqslant0}$. $(A_n,D(A_n))$ denotes the generator of $T_n$.
\smallskip
\\For $n\leqslant-1$ we first define a topology $\check{\tau}_n$ on $X_{n+1}$ via the seminorms $(\check{p}_{\gamma,n})_{\gamma\in\Gamma}$ with $\check{p}_{\gamma,n}(x)=p_{\gamma,n+1}(A_{n+1}^{-1}x)$ for $x\in X_{n+1}$. Now we define $(X_{n},\tau_n)$ to be the completion $(X_{n+1},\check{\tau}_n)^{\hatt{}}$ of $(X_{n+1},\check{\tau}_n)$. For $t\geqslant0$ there exists a unique continuous extension $T_n(t)\colon(X_{n},\tau_n)\rightarrow(X_{n},\tau_n)$ of $T_{n+1}(t)\colon X_{n+1}\rightarrow X_{n+1}$; we put $T_n=(T_n(t))_{t\geqslant0}$. $(A_n,D(A_n))$ is the generator of $T_n$.
\smallskip
\\In analogy to the case of Banach spaces we call $(X_n,\tau_n)$ the \textit{$n$-th Sobolev space} associated with the semigroup $T_0$. In order to prove our main theorem we need the following lemma in which we collect results of Albanese, Bonet, Ricker \cite[Lemma 4.1]{ABR13} and Albanese, K\"uhnemund \cite[Proposition 7]{AK}.

\begin{lem}\label{l1}
\begin{compactitem}
\item[(i)] Let $(X,\tau)$ be a locally convex space and $A\colon D(A)\rightarrow X$ be a closed linear operator with domain $D(A)\subseteq X$. If $(p_{\gamma})_{\gamma\in\Gamma}$ is a fundamental system of seminorms for $\tau$, then the system $(p_{\gamma,A}(\cdot)=p_{\gamma}(\cdot)+p_{\gamma}(A(\cdot)))_{\gamma\in\Gamma}$ defines the \textit{graph topology} $\tau_A$ on $D(A)$. The locally convex space $(D(A),\tau_A)$ is (sequentially) complete if $X$ is (sequentially) complete.\vspace{3pt}
\item[(ii)] Let $(X,\tau)$ be a sequentially complete locally convex space, $(A,D(A))$ be the generator of a strongly continuous and exponentially equicontinuous semigroup $T=(T(t))_{t\geqslant0}$ on $X$. Assume that $D\subseteq(D(A),\tau|_{D(A)})$ is dense and $T$-invariant. Then $D\subseteq (D(A),\tau_{A})$ is dense. \hfill\qed
\end{compactitem}
\end{lem}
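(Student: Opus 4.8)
The plan is to treat the two parts separately, with part~(ii) carrying essentially all of the difficulty.

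For part~(i) I would first note that each $p_{\gamma,A}$ is a seminorm and that the family $(p_{\gamma,A})_{\gamma\in\Gamma}$ is directed as soon as $(p_\gamma)_{\gamma\in\Gamma}$ is, so it genuinely defines a locally convex topology $\tau_A$ on $D(A)$. The substance is completeness, which I would deduce directly from closedness of $A$: given a Cauchy net $(x_\iota)$ in $(D(A),\tau_A)$, the very definition of $p_{\gamma,A}$ forces both $(x_\iota)$ and $(Ax_\iota)$ to be Cauchy in $(X,\tau)$; by completeness they converge, say $x_\iota\to x$ and $Ax_\iota\to y$. Closedness of $A$ yields $x\in D(A)$ and $y=Ax$, and then $p_{\gamma,A}(x_\iota-x)=p_\gamma(x_\iota-x)+p_\gamma(Ax_\iota-Ax)\to0$ gives $x_\iota\to x$ in $\tau_A$. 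The sequentially complete case is verbatim the same with sequences in place of nets.

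For part~(ii) the goal is $D(A)\subseteq\overline{D}^{\,\tau_A}$, and the obstacle is that the natural regularising operations do not preserve $D$. I would smooth with the semigroup: for $y\in X$ and $r>0$ set $y_r=\tfrac1r\int_0^r T(s)y\,ds$, an $X$-valued Riemann integral that exists because $s\mapsto T(s)y$ is continuous on the compact interval $[0,r]$ and $\{T(s):s\in[0,r]\}$ is equicontinuous; one has $y_r\in D(A)$ with $Ay_r=\tfrac1r(T(r)y-y)$. As a \emph{first step}, if $y\in D(A)$ then moreover $Ay_r=\tfrac1r\int_0^r T(s)Ay\,ds\to Ay$ and $y_r\to y$ in $X$ as $r\searrow0$, so $y_r\to y$ in $\tau_A$; thus every $x\in D(A)$ is $\tau_A$-approximated by its averages $x_r$. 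The \emph{second step} replaces $x$ by a $\tau$-close element of $D$ inside the average: fixing $r$ and choosing $d\in D$ with $p_\beta(x-d)$ small (possible since $D$ is dense in $(D(A),\tau|_{D(A)})$, with $\beta$ the index attached to $\gamma$ by exponential equicontinuity), the identities $x_r-d_r=\tfrac1r\int_0^r T(s)(x-d)\,ds$ and $A(x_r-d_r)=\tfrac1r\bigl(T(r)(x-d)-(x-d)\bigr)$ together with $p_\gamma(T(s)z)\leqslant M e^{\omega s}p_\beta(z)$ bound $p_{\gamma,A}(x_r-d_r)$ by a constant (depending only on $r,M,\omega,\gamma$) times $p_\beta(x-d)+p_\gamma(x-d)$, hence make it arbitrarily small.

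The \emph{third step} resolves the obstacle by showing $d_r\in\overline{D}^{\,\tau_A}$ for every $d\in D$. Since $T(s)d\in D$ by $T$-invariance and $D$ is a subspace, every Riemann sum $\tfrac1r\sum_i T(s_i)d\,\Delta_i$ for $d_r$ already lies in $D$, and these converge to $d_r$ in $X$ by continuity of $s\mapsto T(s)d$. For the graph part I use $AT(s)d=T(s)Ad$, valid because $d\in D(A)$, so that $\tfrac1r\sum_i T(s_i)Ad\,\Delta_i$ are Riemann sums converging in $X$ to $\tfrac1r\int_0^r T(s)Ad\,ds=Ad_r$; hence the sums converge to $d_r$ in $\tau_A$. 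Chaining the three steps gives $x\in\overline{D}^{\,\tau_A}$ for every $x\in D(A)$, which is the claim. I expect this last step to be the crux: the averages $d_r$ are a priori only integrals and need not lie in $D$, and it is precisely the combination of $T$-invariance, linearity of $D$, and the commutation $AT(s)d=T(s)Ad$ (which turns the $A$-part of the Riemann sums into a convergent Riemann sum in its own right) that pushes them back into the $\tau_A$-closure of $D$.
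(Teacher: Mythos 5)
Your proposal is correct, and it is essentially ``the'' proof: the paper itself states Lemma~\ref{l1} without proof, quoting part~(i) from \cite[Lemma 4.1]{ABR13} and part~(ii) from \cite[Proposition 7]{AK}, and your three-step argument --- completeness of the graph topology via closedness of $A$, approximation of $x\in D(A)$ in $\tau_A$ by the averages $x_r=\tfrac{1}{r}\int_0^r T(s)x\,ds$ with $x$ replaced inside the average by a $\tau$-close $d\in D$, and finally Riemann sums for $d_r$ that lie in $D$ by $T$-invariance and linearity of $D$ (the commutation $AT(s)d=T(s)Ad$ making their images under $A$ Riemann sums for $Ad_r$) --- is exactly the standard argument carried out in those references, cf.\ also \cite[Proposition II.1.7]{EngelNagelOne} for the Banach-space case. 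Two cosmetic points only: the existence of the Riemann integrals rests on the assumed sequential completeness of $X$ (not on equicontinuity, which you cite for it; continuity on $[0,r]$ already gives uniform continuity and hence Cauchy Riemann sums), and you correctly read $D$ as a linear subspace, as in the core definition \cite[Definition 6]{AK}, which is what your third step needs.
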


\vspace{-10pt}
In the classical case, see e.g.~Engel, Nagel \cite[II.1.6]{EngelNagelOne}, aswell as in the notation of Albanese, K\"uhnemund \cite[Definition 6]{AK} the space $D$ is said to be a \textit{core} for $A$.
\smallskip
\\Let us now prove the main theorem of this section.
\smallskip

\begin{thm}\label{t1} The spaces $(X_n,\tau_n)$ are well-defined and complete locally convex spaces with the following properties.
\begin{compactitem}\vspace{-10pt}
\item[(i)] $X_{n}\subseteq X_{n-1}$ holds and $X_{n}\subseteq(X_{n-1},\tau_{n-1})$ is dense. The inclusion map $(X_{n},\tau_{n})\hookrightarrow (X_{n-1},\tau_{n-1})$ is continuous.\vspace{3pt}
\item[(ii)] For $n\leqslant0$ the topology $\tau_{n}$ is given by the seminorms $(p_{\gamma,n-1}(A_{n-1}(\cdot)))_{\gamma\in\Gamma}$.\vspace{3pt}
\item[(iii)] For $n\geqslant0$ the space $(X_n,\tau_n)$ is the completion of $(X_{n+1},\check{\tau}_{n})$, where $\check{\tau}_{n}$ is given by the seminorms $(\check{p}_{\gamma,n}(\cdot)=p_{\gamma,n+1}(A_{n+1}^{-1}(\cdot)))_{\gamma\in\Gamma}$.
\end{compactitem}\vspace{-5pt}
The families $T_n=(T_n(t))_{t\geqslant0}$ define strongly continuous and exponentially equicontinuous semigroups on the $(X_n,\tau_n)$ with the following properties.
\begin{compactitem}\vspace{-5pt}
\item[(iv)] $T_{n}(t)\colon X_{n}\rightarrow X_{n}$ is the restriction of $T_{n-1}(t)\colon X_{n-1}\rightarrow X_{n-1}$ and the unique continuous extension of $T_{n+1}(t)\colon X_{n+1}\rightarrow X_{n+1}$ for every $t\geqslant0$.\vspace{3pt}
\item[(v)] $D(A_n)=X_{n+1}$ holds and the generator $A_n\colon(X_{n+1},\tau_{n+1})\rightarrow(X_n,\tau_n)$ is an isomorphism. It is the restriction of $A_{n-1}\colon X_{n}\rightarrow X_{n-1}$ and the unique continuous extension of $A_{n+1}\colon X_{n+2}\rightarrow X_{n+1}$.
\end{compactitem}
\end{thm}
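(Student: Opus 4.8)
The plan is to prove all assertions by a single induction that propagates outward from $n=0$ in both directions. The inductive invariant I would carry is the package $(\ast)_n$: that $(X_n,\tau_n)$ is a complete Hausdorff locally convex space, that $T_n$ is a strongly continuous and exponentially equicontinuous semigroup on it (with the \emph{same} constants $\omega$ and $M$ as for $T_0$), and that its generator $A_n$ is a bijection of its domain onto $X_n$ with $A_n^{-1}\in L(X_n)$. The base case $(\ast)_0$ is exactly the standing hypothesis on $X_0$ and $A_0$, and since this invariant is what each recursive step consumes, establishing $(\ast)_n$ for all $n$ simultaneously proves well-definedness. Alongside $(\ast)_n$ I would record, for each pair of consecutive levels, the ``link'' that $A_n\colon(X_{n+1},\tau_{n+1})\to(X_n,\tau_n)$ is an isomorphism with $D(A_n)=X_{n+1}$ and $p_{\gamma,n+1}(x)=p_{\gamma,n}(A_nx)$, together with density and the restriction/extension compatibilities. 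Properties (i)--(v) are then read off: (v) is the link; (ii) is the seminorm identity at level $n-1$; and (iii) follows once one checks $\check p_{\gamma,n}(x)=p_{\gamma,n}(x)$ on $X_{n+1}$ (using $A_{n+1}=A_n|_{X_{n+2}}$) together with density.

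For the ascending step (from $(\ast)_n$ with $n\geqslant0$ to level $n+1$) I would argue as follows. Since $A_n^{-1}$ is continuous, the seminorms $p_{\gamma,n}(A_n\,\cdot\,)$ defining $\tau_{n+1}$ dominate $p_{\gamma,n}$ on $D(A_n)$, so $\tau_{n+1}$ agrees with the graph topology of the closed operator $A_n$; Lemma \ref{l1}(i) then gives completeness, and injectivity of $A_n$ gives Hausdorffness. That $T_{n+1}=T_n|_{X_{n+1}}$ is again strongly continuous and exponentially equicontinuous follows from the commutation $A_nT_n(t)=T_n(t)A_n$, which turns every $\tau_{n+1}$-estimate into a $\tau_n$-estimate for $A_n(\,\cdot\,)$. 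Computing the generator, a difference quotient converges in $\tau_{n+1}$ exactly when its image under $A_n$ converges in $\tau_n$; this identifies $D(A_{n+1})=\{x\in D(A_n):A_nx\in D(A_n)\}$ and $A_{n+1}=A_n|_{D(A_{n+1})}$, whence $A_{n+1}$ is injective, surjective (via $x\mapsto A_n^{-1}x$), and has continuous inverse, re-establishing $(\ast)_{n+1}$. Density of $D(A_n)$ in $X_n$ (domains of generators are dense, cf.~\cite{Komura}) gives the density in (i), while the bound $p_{\gamma,n}(z)\leqslant C\max_{\beta}p_{\beta,n}(A_nz)=C\max_{\beta}p_{\beta,n+1}(z)$ gives continuity of the inclusion.

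For the descending step (from $(\ast)_n$ with $n\leqslant0$ to level $m=n-1$) the construction of the extension $T_m$ of $T_n$ is routine: $T_n(t)$ is $\check\tau_m$-equicontinuous because $A_n^{-1}$ commutes with it, it is $\check\tau_m$-strongly continuous on the dense subspace $X_n$, and an equicontinuity-plus-density argument extends strong continuity to all of $X_m$. The heart of the matter, and the step I expect to be the main obstacle, is to identify the generator $A_m$ of $T_m$ and to show $D(A_m)=X_n$. My plan is to note first that $A_n^{-1}\colon(X_n,\check\tau_m)\to(X_{n+1},\tau_n|_{X_{n+1}})$ is a surjective seminorm-preserving bijection, which therefore extends to an isomorphism $\Phi\colon(X_m,\tau_m)\to(X_n,\tau_n)$ of the completions; the candidate generator is $\Phi^{-1}$. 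The operator $\Phi^{-1}$ is closed (it is the inverse of a homeomorphism), and on $X_{n+1}$ it agrees with $A_n$, hence with $A_m$, since there the $\tau_m$-difference quotients reduce to $\tau_n$-difference quotients. The subtlety is that $\Phi^{-1}$ has domain $X_n$ whereas $A_m$ a priori only contains $X_{n+1}$ in its domain; to bridge this I would invoke Lemma \ref{l1}(ii) with the $T_m$-invariant, $\tau_m$-dense subspace $X_{n+1}$, which makes $X_{n+1}$ a core for $A_m$. Being a core forces the graph of $A_m$ into the closure of the graph of $A_m|_{X_{n+1}}=\Phi^{-1}|_{X_{n+1}}$, i.e.~$A_m\subseteq\Phi^{-1}$; combined with $D(\Phi^{-1})=X_n\subseteq D(A_m)$ this yields $A_m=\Phi^{-1}$ and $D(A_m)=X_n=X_{m+1}$. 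Thus $A_m$ is an isomorphism with $A_m^{-1}=\Phi\in L(X_m)$, re-establishing $(\ast)_m$, and the identity $p_{\gamma,n}(x)=p_{\gamma,m}(A_mx)$ built into $\Phi$ delivers (ii) and the seminorm part of (v) at this level.

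Finally I would assemble (i)--(v) for every $n$ from the invariants $(\ast)_n$ and the links, checking the remaining restriction/extension and uniqueness statements in (iv) and (v) by the usual density argument (a continuous operator is determined by its values on a dense subspace) and by reading each definitional restriction backwards as an extension. The only genuinely delicate point is the generator identification in the descending step described above, where Lemma \ref{l1}(ii) is indispensable; everything else is a matter of transporting the base estimates through the operators $A_n$ and $A_n^{-1}$.
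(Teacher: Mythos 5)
Your double induction outward from $n=0$ with the invariant $(\ast)_n$ is precisely the skeleton of the paper's proof, and your ascending step coincides with it in all essentials: identification of $\tau_{n+1}$ with the graph topology $\tau_{A_n}$ via continuity of $A_n^{-1}$, completeness from Lemma \ref{l1}.(i), transport of strong continuity and exponential equicontinuity through $A_n$, and the computation $D(A_{n+1})=\{x\in D(A_n)\;;\;A_nx\in D(A_n)\}$ are the paper's steps (1)--(5) for $n\geqslant1$. Where you genuinely diverge is the descending generator identification. The paper proves $D(A_m)=X_n$ (in your indexing, $m=n-1$) in four moves: difference quotients give $X_{n+1}\subseteq D(A_m)$; a Cauchy-net argument using $\check p_{\gamma,m}(A_nx_i-A_nx_j)=p_{\gamma,n}(x_i-x_j)$ together with closedness of the generator upgrades this to $X_n\subseteq D(A_m)$; Lemma \ref{l1}.(ii) makes $X_n$ dense in $(D(A_m),\tau_{A_m})$; and the identity $\tau_n=\tau_{A_m}|_{X_n}$, proved via the comparison lemma of Bierstedt, Meise, Summers \cite[Lemma 1.2]{BMS1982}, shows $X_n$ is simultaneously complete (hence closed) there, forcing equality. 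Your alternative -- extend the seminorm-preserving bijection $A_n^{-1}\colon(X_n,\check\tau_m)\rightarrow(X_{n+1},\tau_n|_{X_{n+1}})$ to an isomorphism $\Phi$ of the completions and identify $A_m=\Phi^{-1}$ -- is sound and arguably cleaner in one direction: the core property from Lemma \ref{l1}.(ii) (applied with the smaller invariant subspace $X_{n+1}$, which is legitimate since it is $\tau_m$-dense) pushes the graph of $A_m$ into the closed graph of $\Phi^{-1}$, so $A_m\subseteq\Phi^{-1}$ with no topology-comparison lemma needed; moreover the isometry built into $\Phi$ yields item (ii) at once, where the paper runs a separate density argument (its step (5)).

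There is, however, one real hole: the reverse inclusion $D(\Phi^{-1})=X_n\subseteq D(A_m)$, which you present as already available (``combined with $D(\Phi^{-1})=X_n\subseteq D(A_m)$\dots''), is established nowhere in your text. Your difference-quotient remark only yields $X_{n+1}\subseteq D(A_m)$, and the core argument only yields $D(A_m)\subseteq X_n$; without the missing inclusion you cannot exclude that $A_m$ is a \emph{proper} restriction of $\Phi^{-1}$, and the conclusion $D(A_m)=X_{m+1}$ collapses. The patch is short and is exactly the paper's Cauchy-net step in disguise: for $x\in X_n$ choose a net $(x_i)_{i\in I}\subseteq X_{n+1}$ with $x_i\rightarrow x$ in $\tau_n$; then $x_i\rightarrow x$ in $\tau_m$ and $A_mx_i=\Phi^{-1}x_i\rightarrow\Phi^{-1}x$ in $\tau_m$, since $\Phi^{-1}\colon(X_n,\tau_n)\rightarrow(X_m,\tau_m)$ is continuous (equivalently, $\check p_{\gamma,m}(A_nx_i-A_nx_j)=p_{\gamma,n}(x_i-x_j)$ shows $(A_mx_i)_{i\in I}$ is Cauchy); since the generator $A_m$ is closed (K\={o}mura \cite{Komura}), it follows that $x\in D(A_m)$ and $A_mx=\Phi^{-1}x$. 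With these two lines inserted your route closes, avoids \cite[Lemma 1.2]{BMS1982} entirely, and delivers all assertions of Theorem \ref{t1}, the remaining restriction/extension and uniqueness claims following by the density arguments you indicate.
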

\vspace{-20pt}
\begin{proof} $(X_0,\tau_0)$ is complete, $T_0$ is a strongly continuous and exponentially equicontinuous semigroup with generator $A_0$ and  $A_0^{-1}$ is continuous as a map from $(X_0,\tau_0)$ into itself by definition. We show by induction that
\begin{compactitem}\vspace{-3pt}
\item[\text{$[n]$}] $(X_n,\tau_n)$ is complete, $X_n\subseteq (X_{n-1},\tau_{n-1})$ is dense and the inclusion map $(X_n,\tau_n)\hookrightarrow(X_{n-1},\tau_{n-1})$ is continuous. $T_n$ is well-defined and a strongly continuous and exponentially equicontinuous semigroup. $A_n^{-1}\colon(X_n,\tau_n)\rightarrow(X_n,\tau_n)$ exists and is continuous. $\check{p}_{\gamma,n-1}=p_{\gamma,n-1}$ holds for all $\gamma\in\Gamma$.  
\end{compactitem}\vspace{-3pt}
holds for all $n\geqslant1$. Let us show in one step $[1]$ and $[n-1]\Rightarrow[n]$ for $n\geqslant2$.
\smallskip
\\(1) The identity $\id_{D(A_{n-1})}\colon(D(A_{n-1}),\tau_{A_{n-1}})\rightarrow(D(A_{n-1}),\tau_{n})$ is continuous as $p_{\gamma,n}(x)\leqslant p_{\gamma,A_{n-1}}(x)$ holds for any $\gamma\in\Gamma$ and $x\in D(A_{n-1})$, see Lemma \ref{l1}.(i) for the notation. On the other hand for $\alpha\in\Gamma$ there exist $\beta$, $\gamma\in\Gamma$ and constants $C$, $C'\geqslant0$ such that
\begin{eqnarray*}
p_{\alpha,A_{n-1}}(x) \hspace{-6pt}&\leqslant&\hspace{-6pt} p_{\alpha,n-1}(A_{n-1}^{-1}A_{n-1}x)+p_{\alpha,n-1}(A_{n-1}x)\\
                                   &\leqslant&\hspace{-8pt} C p_{\beta,n-1}(A_{n-1}x)+p_{\alpha,n-1}(A_{n-1}x)\leqslant C' p_{\gamma,n}(x)
\end{eqnarray*}
holds for any $x\in D(A_{n-1})$, since $A_{n-1}^{-1}\colon (X_{n-1},\tau_{n-1})\rightarrow(X_{n-1},\tau_{n-1})$ is continuous. Thus, $\id_{D(A_{n-1})}$ is an isomorphism and $(D(A_{n-1}),\tau_{n})$  is complete since this is true for $(D(A_{n-1}),\tau_{A_{n-1}})$, see Lemma \ref{l1}.(i). By \cite[Proposition 1.3]{Komura} we have that $X_n=D(A_{n-1})\subseteq(X_{n-1},\tau_{n-1})$ is dense. Using a second time that $A_{n-1}^{-1}$ is continuous it follows that $(X_n,\tau_n)\hookrightarrow(X_{n-1},\tau_{n-1})$ is continuous.
\smallskip
\\(2) By \cite[Proposition 1.2.(1)]{Komura}, $T_n(t)\colon X_n\rightarrow X_n$ is well-defined for any $t\geqslant0$ and then automatically satisfies the semigroup property. Since $T_{n-1}$ is exponentially equicontinuous, there exists $\omega\in\mathbb{R}$ such that for any $\alpha\in\Gamma$ there exist $\beta\in\Gamma$ and $M\geqslant0$ such that
$$
p_{\alpha,n}(T_n(t)x)=p_{\alpha,n-1}(T_{n-1}(t)A_{n-1}x)\leqslant M\exp(\omega t)p_{\beta,n-1}(A_{n-1}x)=M\exp(\omega t)p_{\beta,n}(x)
$$
holds for all $x\in X_n$ and $t\geqslant0$. Thus, $T_n$ is exponentially equicontinuous. For $s\geqslant0$, $\alpha\in\Gamma$ and $x\in X_n$ we have
$$
p_{\alpha,n}(T_n(t)x-T_n(s)x)=p_{\alpha,n-1}(T_{n-1}(t)A_{n-1}x-T_{n-1}(s)A_{n-1}x)\stackrel{\scriptstyle t\rightarrow s}{\longrightarrow}0
$$
since $T_{n-1}$ is strongly continuous on $(X_{n-1},\tau_{n-1})$.
\smallskip
\\(3) Let $x\in X_n$ be given. By the definition of $\tau_{n}$ and since $(X_n,\tau_n)$ is complete by (1), $\lim_{h\searrow0}\frac{1}{h}(T_n(h)x-x)$ exists in $(X_n,\tau_n)$ if and only if $\lim_{h\searrow0}\frac{1}{h}(T_{n-1}(h)A_{n-1}x-A_{n-1}x)$ exists in $(X_{n-1},\tau_{n-1})$, thus $D(A_{n})=\{x\in X_n\:;\:A_{n-1}x\in X_n\}$ holds. For $x\in D(A_n)$ we have (note that $A_{n-1}x\in X_n=D(A_{n-1})$ follows from the above)
$$
p_{\alpha,n}(A_{n-1}x-{\textstyle\frac{1}{t}}(T_n(t)x-x)) = p_{\alpha,n-1}(A_{n-1}^2x-{\textstyle\frac{1}{t}}(T_{n-1}(t)A_{n-1}x-A_{n-1}x))\stackrel{\scriptstyle t\searrow0}{\longrightarrow}0
$$
since $\lim_{t\searrow0}\frac{1}{t}(T_{n-1}(t)A_{n-1}x-A_{n-1}x)=A_{n-1}^2x$ in $(X_{n-1},\tau_{n-1})$. Thus we have $A_{n}=A_{n-1}|_{X_{n+1}}$ and in particular $A_n$ is injective. Since $X_{n+1}=\{x\in X_n\:;\:A_{n-1}x\in X_n\}$ holds, $A_n\colon X_{n+1}\rightarrow X_n$ is surjective. Thus, $A_{n}^{-1}$ exists and it is continuous as a map from $(X_n,\tau_n)$ to $(X_{n+1},\tau_{n+1})$ by the definition of $\tau_{n+1}$ and thus as a map from $(X_n,\tau_n)$ into itself by (1).
\smallskip
\\(4) By (3), $\check{\tau}_{n-1}$ is well-defined and it follows that $A_{n}^{-1}=A_{n-1}^{-1}|_{X_n}$ holds. Thus for $\gamma\in\Gamma$ and $x\in X_n$ we have $\check{p}_{\gamma,n-1}(x)=p_{\gamma,n}(A_n^{-1}x)=p_{\gamma,n-1}(A_{n-1}A_{n-1}^{-1}x)=p_{\gamma,n-1}(x)$.
\smallskip
\\(5) $A_{n}\colon (X_{n+1},\tau_{n+1})\rightarrow (X_{n},\tau_{n})$ is continuous by the definition of $\tau_{n+1}$. By (3) it is therefore an isomorphism with inverse $A_n^{-1}\colon(X_{n},\tau_{n})\rightarrow(X_{n+1},\tau_{n+1})$.
%%%%%%%%%%%%%%%%%%%%%%%%%%%%%%%%%%%%%%%%%%%%%
%                                           %
% 2nd part of the proof                     %
%                                           %
%%%%%%%%%%%%%%%%%%%%%%%%%%%%%%%%%%%%%%%%%%%%%
\bigskip
\\We have shown so far all statements of the theorem which involve only non-negative indices; it remains to prove those which involve at least one strictly negative index. Again we show by induction that $[-1]$ and $[n+1]\Rightarrow[n]$ holds for $n\leqslant-2$ where
\begin{compactitem}\vspace{-3pt}
\item[\text{$[n]$}] The inclusion map $(X_{n+1},\tau_{n+1})\hookrightarrow(X_{n},\tau_{n})$ is continuous. $T_n$ is well-defined and a strongly continuous and exponentially equicontinuous semigroup. We have $D(A_{n})=X_{n+1}$. $A_n^{-1}\colon(X_n,\tau_n)\rightarrow(X_n,\tau_n)$ exists and is continuous. $p_{\gamma,n+1}(\cdot)=p_{\gamma,n}(A_n(\cdot))$ holds for all $\gamma\in\Gamma$.
\end{compactitem}\vspace{-3pt}
(1) Since $A_{n+1}^{-1}\colon(X_{n+1},\tau_{n+1})\rightarrow(X_{n+1},\tau_{n+1})$ is continuous, for $\gamma\in\Gamma$ there exist $\delta\in\Gamma$ and $C\geqslant0$ such that $\check{p}_{\gamma,n}(x)=p_{\gamma,n+1}(A_{n+1}^{-1}x)\leqslant Cp_{\delta,n+1}(x)$ holds for any $x\in X_{n+1}$, i.e.~the identity $(X_{n+1},\tau_{n+1})\rightarrow(X_{n+1},\check{\tau}_n)$ is continuous and consequently the inclusion $(X_{n+1},\tau_{n+1})\hookrightarrow(X_{n},\tau_n)$ is continuous.
\smallskip
\\(2) Since $T_{n+1}$ is exponentially equicontinuous on $(X_{n+1},\tau_{n+1})$ there exists $\omega\in\mathbb{R}$ such that for $\gamma\in\Gamma$ there exist $\delta\in\Gamma$ and $M\geqslant1$ such that
\begin{eqnarray*}
\check{p}_{\gamma,n}(T_{n+1}(t)x)\hspace{-6pt}&=&\hspace{-6pt}p_{\gamma,n+1}(A_{n+1}^{-1}T_{n+1}(t)x)=p_{\gamma,n+1}(T_{n+1}(t)A_{n+1}^{-1}(t)x) \\
                                  &\leqslant&\hspace{-6pt}M\exp(\omega t)\,p_{\delta,n+1}(A_{n+1}^{-1}x)=M\exp(\omega t)\,\check{p}_{\delta,n}(x)
\end{eqnarray*}
holds for any $t\geqslant0$ and $x\in X_{n+1}$. Thus, $T_{n+1}(t)\colon(X_{n+1},\check{\tau}_{n})\rightarrow(X_{n+1},\check{\tau}_{n})$ is continuous for every $t\geqslant0$, i.e.~it extends continuously on the completions. Thus, $T_{n}(t)\colon(X_{n},\tau_{n})\rightarrow(X_{n+1},\tau_{n})$ is well defined. In particular, $T_n$ is exponentially equicontinuous and it satisfies the semigroup property. Finally, $T_n$ is strongly continuous on the dense subspace $X_{n+1}$ w.r.t.~the finer topology $\tau_{n+1}$ and thus also w.r.t.~$\check{\tau}_{n}$ and therefore by the density and the exponential equicontinuity also on the whole space $(X_{n},\tau_n)$.
\smallskip
\\(3) By definition $T_n(t)|_{X_{n+1}}=T_{n+1}(t)$ holds for any $t\geqslant0$. For $x\in D(A_{n+1})$ we have
$$
A_{n+1}x={\textstyle\lim_{t\searrow0}}{\textstyle\frac{1}{t}}(T_{n+1}(t)x-x)={\textstyle\lim_{t\searrow0}}{\textstyle\frac{1}{t}}(T_{n}(t)x-x)=A_nx.
$$
Thus, $D(A_{n+1})\subseteq D(A_n)$ and $A_{n}|_{D(A_{n+1})}=A_{n+1}$. Let $x\in X_{n+1}$. Then there exists a net $(x_i)_{i\in I}\subseteq D(A_{n+1})$ with $x_i\rightarrow x$ in $(X_{n+1},\tau_{n+1})$ and thus also in $(X_{n+1},\check{\tau}_{n})$. For $\gamma\in\Gamma$ and $i$, $j\in I$ we compute
$$
\check{p}_{\gamma,n}(A_{n}x_i-A_nx_j)=p_{\gamma,n+1}((A_{n+1}^{-1}A_{n+1})(x_i-x_j))=p_{\gamma,n+1}(x_i-x_j)
$$
which shows that $(A_nx_i)_{i\in I}$ is a Cauchy net in $(X_{n+1},\check{\tau}_n)$, since $(x_i)_{i\in I}$ is Cauchy in $(X_{n+1},\tau_{n+1})$. Therefore there exists $y\in X_{n}$ with $A_nx_i\rightarrow y$ in $(X_{n},\tau_n)$. Since $A_n$ is closed, we get $x\in D(A_n)$ which proves $X_{n+1}\subseteq D(A_n)$. By Lemma \ref{l1}.(ii), $X_{n+1}$ is dense in $(D(A_n),\tau_{A_{n}})$. Let us show that $\tau_{n+1}=\tau_{A_{n}}|_{X_{n+1}}$ holds. From this it follows that $(X_{n+1},\tau_{A_{n}}|_{X_{n+1}})$ is complete since this is true for $(X_{n+1},\tau_{n+1})$. Consequently, $X_{n+1}$ is at the same time dense and closed in $(D(A_{n}),\tau_{A_{n}})$ which implies $X_{n+1}=D(A_{n})$. Let $\alpha\in\Gamma$. There exist $\beta$ resp.~$\gamma\in\Gamma$ and $C\geqslant0$ such that 
\begin{eqnarray*}
p_{\alpha,A_{n}}(x)\hspace{-6pt}&=&\hspace{-6pt}\check{p}_{\alpha,n}(x) + \check{p}_{\alpha,n}(A_{n}x) =p_{\alpha,n+1}(A_{n+1}^{-1}x) + p_{\alpha,n+1}(A_{n+1}^{-1}A_{n+1}x) \\
                                &\leqslant&\hspace{-6pt}C p_{\beta,n+1}(x) + p_{\alpha,n+1}(x) \leqslant  C' p_{\gamma,n+1}(x)
\end{eqnarray*}
holds for $x\in D(A_{n+1})$. On the other hand for $\alpha\in\Gamma$ we have
\begin{eqnarray*}
p_{\alpha,n+1}(x)\hspace{-6pt}&\leqslant&\hspace{-6pt}p_{\alpha,n+1}(A_{n+1}^{-1}x) + p_{\alpha,n+1}(A_{n+1}^{-1}A_{n+1}x)\\
&=&\hspace{-6pt}\check{p}_{\alpha,n}(x) + \check{p}_{\alpha,n}(A_{n}x)=  p_{\alpha,A_n}(x)
\end{eqnarray*}
for $x\in D(A_{n+1})$. Thus, we have $\tau_{A_{n}}|_{D(A_{n+1})}=\tau_{n+1}|_{D(A_{n+1})}$. The first estimate above shows in addition that $\id_{X_{n+1}}|_{D(A_{n+1})}\colon(D(A_{n+1}),\tau_{n+1}|_{D(A_{n+1})})\rightarrow(X_{n+1},\tau_{A_n}|_{X_{n+1}})$ is continuous. Since $D(A_{n+1})\subseteq (X_{n+1},\tau_{n+1})$ is dense we get that  $\id_{X_{n+1}}\colon (X_{n+1},\tau_{n+1})\rightarrow(X_{n+1},\tau_{A_n}|_{X_{n+1}})$ is continuous. Thus, in the chain 
$$
D(A_{n+1})\subseteq (X_{n+1},\tau_{n+1})\subseteq (D(A_n),\tau_{A_n})
$$
we have that $D(A_{n+1})\subseteq (X_{n+1},\tau_{n+1})$ is dense, $\tau_{n+1}|_{D(A_{n+1})}=\tau_{A_n}|_{D(A_{n+1})}$ holds and $\tau_{n+1}$ is finer than $\tau_{A_n}|_{X_{n+1}}$. Thus, Bierstedt, Meise, Summers \cite[Lemma 1.2]{BMS1982} provides $\tau_{n+1}=\tau_{A_{n}}|_{X_{n+1}}$ and we obtain $D(A_n)=X_{n+1}$.
\smallskip
\\(4) By (3) we know that $A_n\colon(X_{n+1},\tau_{n+1})\rightarrow(X_n,\tau_n)$ is well-defined and continuous as the latter map is continuous for the graph topology. We know that $D(A_{n+1})=X_{n+2}$ holds; for $n=-1$ this is just the definition, for $n\leqslant-2$ this follows from our induction hypothesis. Therefore, $A_n|_{X_{n+2}}=A_{n}|_{D(A_{n+1})}=A_{n+1}$ and $X_{n+2}\subseteq X_{n+1}$ is dense, whence $A_n$ is the unique continuous extension of $A_{n+1}\colon(X_{n+2},\tau_{n+1}|_{X_{n+2}})\rightarrow(X_{n+1},\tau_{n}|_{X_{n+1}})$. The latter map is bijective; for $n=-1$ this follows from our general assumptions at the beginning of this section, for $n\leqslant-2$ it follows from the induction hypothesis. From the definition of $\check{\tau}_n$ it follows that $A_{n+1}\colon(X_{n+2},\tau_{n+1}|_{X_{n+2}})\rightarrow(X_{n+1},\tau_{n}|_{X_{n+1}})$ is also open, hence an isomorphism. Therefore, the continuous extension $A_n$ to the completions is also an isomorphism with inverse $A_n^{-1}\colon(X_{n},\tau_n)\rightarrow(X_{n+1},\tau_{n+1})$. In particular, (1) implies that $A_{n}^{-1}$ is continuous as a map from $(X_n,\tau_n)$ into itself.
\smallskip
\\(5) For $\gamma\in\Gamma$ and $x\in X_{n+2}$ we have $p_{\gamma,n+1}(x)=p_{\gamma,n+1}(A_{n+1}^{-1}A_{n+1}x)=\check{p}_{\gamma,n}(A_{n+1}x)=p_{\gamma,n}(A_{n+1}x)=p_{\gamma,n}(A_{n}x)$ since $A_{n+1}x\in X_{n+1}$ and $A_n|_{X_{n+2}}=A_{n+1}$ holds. As $X_{n+2}\subseteq (X_{n+1},\tau_{n+1})$ is dense and $p_{\gamma,n+1}$, $p_{\gamma,n}(A_n(\cdot))\colon (X_{n+1},\tau_{n+1})\rightarrow\mathbb{R}$ are continuous, the equality above also holds for all $x\in X_{n+1}$.
\end{proof}

\begin{cor}\label{c1}\begin{compactitem} \item[(i)] For every $n\in\mathbb{Z}$ the map $A_{n}^{-1}\colon(X_n,\tau_n)\rightarrow(X_{n+1},\tau_{n+1})$ is the restriction of $A_{n-1}^{-1}\colon(X_{n-1},\tau_{n-1})\rightarrow(X_{n},\tau_{n})$ and the unique continuous extension of $A_{n+1}^{-1}\colon(X_{n+1},\tau_{n+1})\rightarrow(X_{n+2},\tau_{n+2})$.
\vspace{3pt}

\item[(ii)] For every $n\in\mathbb{Z}$ and every $t\geqslant0$ we have $T_{n+1}(t)=A_n^{-1}T_n(t)A_n$, i.e.~$T_n$ and $T_{n+1}$ are \textit{similar}, cf.~\cite[II.5.3]{EngelNagelOne}. As in the classical setting we may visualize the \textit{Sobolev tower} with the following commutative diagram.
\begin{diagram}[height=2.0em,width=3.5em]
                                     & \vdots                     &                                 & \vdots                          & \\
                                     & \uTo                       &                                 & \dTo                            & \\
(X_n,\check{\tau}_{n-1})^{\hatt} =\;    & X_{n-1}                    & \rTo^{\scriptstyle T_{n-1}(t)}  & X_{n-1}                         & \\
                                     & \uTo^{\scriptstyle A_{n-1}}&                                 & \dTo_{\scriptstyle A_{n-1}^{-1}}& \\
(X_{n+1},\check{\tau}_{n})^{\hatt}=\;   & X_n                        & \rTo^{\scriptstyle T_n(t)}      & X_n                             & \; = \; (D(A_{n-1}),\tau_{n}) \\
                                     & \uTo^{\scriptstyle A_{n}}  &                                 & \dTo_{\scriptstyle A_{n}^{-1}}  & \\
(X_{n+2},\check{\tau}_{n+1})^{\hatt}=\;\;\;\;\, & X_{n+1}                    & \rTo^{\scriptstyle T_{n+1}(t)}  & X_{n+1}                         & \; = \; (D(A_n),\tau_{n+1}) \\
                                     & \uTo^{\scriptstyle A_{n+1}}&                                 & \dTo_{\scriptstyle A_{n+1}^{-1}}& \\
                                     & X_{n+2}                    & \rTo^{\scriptstyle T_{n+2}(t)}  & X_{n+2}                         & \;\;\;\; = \; (D(A_{n+1}),\tau_{n+2}) \\
                                     & \uTo                       &                                 & \dTo                            & \\
                                     & \rotatebox{90}{\dots}      &                                 & \rotatebox{90}{\dots}           & \\
                                     &                            &                                 &                                 & \\
\end{diagram}
\end{compactitem}\vspace{-30pt}
\end{cor}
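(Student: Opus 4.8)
The plan is to deduce both statements directly from Theorem \ref{t1}, with no new construction needed: everything reduces to transferring the restriction/extension relations of the operators $A_n$ to their inverses, together with the elementary commutation of a semigroup with its generator.

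For (i) I would start from Theorem \ref{t1}.(v), which tells me that for every $n\in\mathbb{Z}$ the generator $A_n\colon(X_{n+1},\tau_{n+1})\rightarrow(X_n,\tau_n)$ is an isomorphism restricting $A_{n-1}$ and continuously extending $A_{n+1}$; hence $A_n^{-1}\colon(X_n,\tau_n)\rightarrow(X_{n+1},\tau_{n+1})$ is an isomorphism too. To check that $A_n^{-1}$ is the restriction of $A_{n-1}^{-1}$, I fix $y\in X_n\subseteq X_{n-1}$, put $x=A_n^{-1}y\in X_{n+1}$, and use $A_n=A_{n-1}|_{X_{n+1}}$ to get $A_{n-1}x=A_nx=y$, so that $A_{n-1}^{-1}y=x=A_n^{-1}y$. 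The same computation with $n$ replaced by $n+1$ gives $A_{n+1}^{-1}=A_n^{-1}|_{X_{n+1}}$, i.e.~$A_n^{-1}$ extends $A_{n+1}^{-1}$; since $X_{n+1}$ is dense in $(X_n,\tau_n)$ by Theorem \ref{t1}.(i) and $A_n^{-1}$ is continuous, this extension is the unique continuous one, finishing (i).

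For (ii) the one substantial ingredient is the commutation relation $A_nT_n(t)=T_n(t)A_n$ on $D(A_n)=X_{n+1}$. I would establish it by the usual argument: for $x\in X_{n+1}$ the difference quotient satisfies
\begin{equation*}
{\textstyle\frac{1}{h}}(T_n(h)T_n(t)x-T_n(t)x)=T_n(t){\textstyle\frac{1}{h}}(T_n(h)x-x)\stackrel{\scriptstyle h\searrow0}{\longrightarrow}T_n(t)A_nx,
\end{equation*}
using the semigroup property and the continuity of $T_n(t)$, so that $T_n(t)x\in D(A_n)$ and $A_nT_n(t)x=T_n(t)A_nx$ (the familiar commutation of a semigroup with its generator, cf.~\cite{Komura}). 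Since $T_{n+1}(t)$ is the restriction of $T_n(t)$ to $X_{n+1}$ by Theorem \ref{t1}.(iv), it then follows for $x\in X_{n+1}$ that
\begin{equation*}
A_n^{-1}T_n(t)A_nx=A_n^{-1}A_nT_n(t)x=T_n(t)x=T_{n+1}(t)x,
\end{equation*}
which is the claimed similarity. The commutativity of the displayed diagram is then immediate, each square commuting by this relation together with the restriction/extension identities from Theorem \ref{t1}.(v) and part (i).

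The care required is essentially bookkeeping -- keeping track of which space $(X_k,\tau_k)$ each vector lies in and in which topology a limit or a density statement is taken. I do not expect a genuine obstacle, since the only non-formal step, the commutation identity, is standard and carries over verbatim to the locally convex setting: it uses nothing beyond strong continuity of $T_n$ and continuity of the individual operators $T_n(t)$.
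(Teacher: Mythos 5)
Your proposal is correct and matches the paper's (implicit) argument: the corollary carries no separate proof there, being read off from Theorem \ref{t1} exactly as you do -- the relation $A_n^{-1}=A_{n-1}^{-1}|_{X_n}$ is even established explicitly in step (4) of the first induction in the proof of Theorem \ref{t1}, and the commutation $T_n(t)D(A_n)\subseteq D(A_n)$ with $A_nT_n(t)=T_n(t)A_n$, which you reprove by the difference-quotient argument, is the cited result \cite[Proposition 1.2.(1)]{Komura}. Your density argument for uniqueness of the continuous extension via Theorem \ref{t1}.(i) is likewise the intended one.
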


Let us add some remarks on the construction.

\begin{rem}\label{r1}\begin{compactitem}\item[(i)] The proof of \cite[Proposition 1.3]{Komura} shows that the domain of the generator of a strongly continuous and exponentially equicontinuous semigroup is not only dense but sequentially dense in the underlying space. Therefore, we get that for every $n\in\mathbb{Z}$ the space $X_n$ is in fact sequentially dense in $(X_{n-1},\tau_{n-1})$, cf.~Theorem \ref{t1}.(i). In particular, $X_{n}$ can be thought of as the set of equivalence classes of (possibly non-convergent) $(X_n,\check{\tau}_{n+1})$-Cauchy sequences. Note, that in general this is not even true for the sequential completion, cf.~\cite[p.~146]{BPC}.
\vspace{3pt}

\item[(ii)] An inspection of the first part of the proof of Theorem \ref{t1} shows that the construction of the lower part of the Sobolev tower can also be performed when the starting space $(X_0,\tau_0)$ is only sequentially complete. The spaces $(X_n,\tau_n)$ for $n\geqslant1$ then will also be only sequentially complete, cf.~Lemma \ref{l1}.(i). For the upper tower this seems not to be true: In order to get $X_{n+1}=D(A_n)$ for $n\leqslant-1$ we showed that $X_{n+1}$ is dense and closed in $D(A_n)$ w.r.t.~the graph topology. If $(X_{n+1},\tau_{n+1})$ is only sequentially complete, our arguments show that $X_{n+1}$ is sequentially closed in $(D(A_n),\tau_{A_n})$. To show that the latter is dense we employed Lemma \ref{l1}.(ii), which is also valid if the underlying space is only sequentially complete. However, its proof (see \cite[Proposition 7]{AK}) does not provide that $D\subseteq(D(A),\tau_A)$ is sequentially dense since in general it might happen that $D\subset\seq(D)\subset\seq(\seq(D))$, where
$$
\seq(M)=\{x\in X\:;\:\exists\:(x_n)_{n\in\mathbb{N}}\subseteq M \colon x_n\rightarrow x \text{ for } n\rightarrow\infty\}
$$ 
for $M\subseteq (X,\tau)$. In this case $D$ then cannot be sequentially dense in $\seq(\seq(D))$.
\vspace{3pt}

\item[(iii)] The definition of the $X_n$ given in \cite[II.5]{EngelNagelOne} differs slightly from those given in \cite{Nagel}. In our approach above, the definition is completely symmetric in the sense that we only used $X_n$ to define $X_{n-1}$ for $n\geqslant0$ and we only used $X_{n+1}$ to define $X_{n}$ for $n\leqslant-1$. Therefore, the whole tower can be (uniquely) recovered from every single semigroup $T_n$ by construction. Given $T_m$, $X_m$ and $A_m$ we get by iteration that $X_n=D(A_m^{n-m})$ and $p_{\alpha,n}(\cdot)=p_{\alpha,m}(A_m^{n-m}(\cdot))$ holds for $n\geqslant m$. On the other hand for $n\leqslant m$ we get $(X_n,\tau_n)=(X_m,\check{\tau}_n)^{\hatt}$ where $\check{\tau}_n$ is given by the seminorms $\check{p}_{\alpha,n}(\cdot)=p_{\alpha,m}(A_m^{n-m}(\cdot))$. Note that above we use the conventions $A_m^0=\id_{X_m}$ and $D(\id_{X_m})=X_m$. The special case $m=0$ recovers the definitions in \cite[5.1]{EngelNagelOne} resp.~\cite[1.1 and 1.4]{Nagel}. Corresponding statements on $A_n$ and $T_
n$, e.g.~$T_n(t)=T_m(t)|_{X_n}$ for $n\geqslant m$ and $t\geqslant0$ etc., follow immediately.\vspace{3pt}

\item[(iv)] Let $\lambda\in\mathbb{C}$ be such that $A_0+\lambda\colon D(A_0)\rightarrow X_0$ is invertible and $(A_0+\lambda)^{-1}\in L(X_0)$ holds. Define $S_0=(S_0(t))_{t\geqslant0}$ via $S_0(t)=\exp(\lambda t)T_0(t)$ for $t\geqslant0$. By Lemma \ref{rescaling}, $(B_0,D(B_0))=(A_0+\lambda,D(A_0))$ is the generator of $S_0$. Let us for the Sobolev tower corresponding to $T_0$ keep the notation from above and denote the spaces in the tower corresponding to $S_0$ with $(Y_n,\sigma_n)$. With the same techniques as in the proof of Theorem \ref{t1} it can be shown that there are isomorphisms $\varphi_n\colon (X_n,\tau_n)\rightarrow(Y_n,\sigma_n)$ such that $\varphi_{n+1}=\varphi_{n}|_{X_{n+1}}$ holds for all $n\in\mathbb{Z}$, cf.~\cite[Exercise II.5.9]{EngelNagelOne}.
\end{compactitem}
\end{rem}

\vspace{-10pt}
Let us conclude this section with a concrete example for a Sobolev tower of Fr\'{e}chet spaces.

\begin{ex} Let $B=(b_{j,k})_{j,k\in\mathbb{N}}\subseteq[0,\infty[$ be a K\"othe matrix, see \cite[Section 27]{MeiseVogtEnglisch} for the precise definition, and let
$$
c_0(B)=\big\{x\in\mathbb{C}^{\mathbb{N}}\:;\:\forall\:k\in\mathbb{N}\colon\:{\textstyle\lim_{j\rightarrow\infty}}b_{j,k}|x_j|=0\big\}
$$
denote the K\"othe echelon space of order zero, which is a Fr\'{e}chet spaces for the seminorms $(p_k)_{k\in\mathbb{N}}$, $p_k(x)=\sup_{j\in\mathbb{N}}b_{j,k}|x_j|$ for $x\in c_0(B)$. Put $X_0=c_0(B)$. Let $(q_j)_{j\in\mathbb{N}}\subseteq \mathbb{C}$ with $\sup_{j\in\mathbb{N}}\Re q_j<0$. For $t\geqslant0$ put $T_0(t)x=(e^{tq_j}x_j)_{j\in\mathbb{N}}$, which defines a strongly continuous and equicontinuous semigroup $T_0=(T_0(t))_{t\geqslant0}$ on $X_0$ with generator $A_0x=(q_jx_j)_{j\in\mathbb{N}}$ for $x\in D(A_0)=\{x\in X\:;\:A_0x\in X_0\}$.
\smallskip
\\Similar to the Banach space case (cf.~\cite[II.5.7]{EngelNagelOne}) for every $n\in\mathbb{Z}$ we get
$$
X_n=c_0(B_n) \text{ with } B_n=(b_{j,k}|q_j^n|)_{j,k\in\mathbb{N}},\;\;T_n(t)x=(e^{tq_j}x_j)_{j\in\mathbb{N}} \text{ and } A_nx=(q_jx_j)_{j\in\mathbb{N}}
$$
for $x\in X_n$ and $t\geqslant0$ resp.~$x\in X_{n+1}$.
\end{ex}

%%%%%%%%%%%%%%%%%%%%%%%%%%%%%%%%%%%%%%%%%%%%%%%%%%%%%%%%%%%%%%%%%%%%%%%%%%%%%%%%%%%%%%%
%                                                                                     %
%  4 Universal Extrapolation Spaces                                                   %
%                                                                                     %
%%%%%%%%%%%%%%%%%%%%%%%%%%%%%%%%%%%%%%%%%%%%%%%%%%%%%%%%%%%%%%%%%%%%%%%%%%%%%%%%%%%%%%%
\vspace{-5pt}
\section{Universal Extrapolation Spaces}\label{Extra}
\vspace{-10pt}
In the sequel we consider a Sobolev tower as constructed in Section \ref{S1}. By Theorem \ref{t1}, the spaces $(X_n,\tau_n)$ together with the inclusion maps $(X_n,\tau_n)\hookrightarrow(X_m,\tau_m)$ for $m\leqslant n$ form an inductive spectrum in the sense of Bierstedt \cite[p.~41]{Bierstedt1988}, where the ordering on the index set is the natural ordering but reversed. By Remark \ref{r1}.(iv) any rescaling that keeps the generator invertible yields an equivalent spectrum (cf.~Wengenroth \cite[dual to Definition 3.1.6]{Wengenroth}); the next definition is thus invariant up to an isomorphism under such rescalings. We denote by $(X,\tau)=\ind{n\in\mathbb{Z}}(X_n,\tau_n)$ the inductive limit and assume that it exists, cf.~the remarks in \cite[p.~42]{Bierstedt1988} and at the end of Section \ref{Intro}. Algebraically we have $X=\Bigcup{n\in\mathbb{Z}}X_n$ and the topology $\tau$ is by definition the finest locally convex topology on $X$ which makes all inclusions $(X_n,\tau_n)\hookrightarrow (X,\tau)$ 
continuous. It is easy to check that $(X,\tau)=\ind{n\in\mathbb{Z}_{\leqslant0}}(X_{n},\tau_{n})$ holds. If there is no risk of confusion, we will drop the topologies from notation and write $X=\ind{n}X_n$ for both representations given above. In the sequel we call the completion $\hat{X}$ of the inductive limit $X=\ind{n}X_n$ the \textit{universal extrapolation space} associated with the Sobolev tower. We will denote this space by $X_{-\infty}$ and its topology by $\tau_{-\infty}$.
\smallskip
\\For $t\geqslant0$, the maps $T_n(t)$ define an operator $T(t)\colon X\rightarrow X$. By the universal property of the inductive limit \cite[24.7]{MeiseVogtEnglisch}, the latter operators are continuous. Analogously, the maps $A_n\colon X_{n+1}\rightarrow X_n$ define a continuous operator $A\colon X\rightarrow X$. For $t\geqslant0$ let $T_{-\infty}(t)$ and $A_{-\infty}\colon X_{-\infty}\rightarrow X_{-\infty}$ denote the unique continuous extensions of $T(t)$ and $A$. We put $T_{-\infty}=(T_{-\infty}(t))_{t\geqslant0}$. Our aim is now to show that $T_{-\infty}$ is a strongly continuous and exponentially equicontinuous semigroup on $X_{-\infty}$ whose (continuous) generator is a restriction of $A_{-\infty}$. Under the assumption that $X_{-\infty}$ is barrelled, it will turn out that the generator is defined on the whole space $X_{-\infty}$, whence equals $A_{-\infty}$. For the proof we need some preparation; first we establish a locally convex version of the well-known product rule.

\begin{lem}\label{l2} Let $E$ and $F$ be complete locally convex spaces. Assume that $E$ is barrelled. For $a<b$ let $u\colon [a,b]\rightarrow E$ be differentiable and $G\colon [a,b]\rightarrow L(E,F)$ be strongly differentiable, i.e.~$G(\cdot)x\colon[a,b]\rightarrow F$, $s\mapsto G(s)x$ is differentiable for all $x\in E$. Then the map $g\colon [a,b]\rightarrow F$, $g(s)=G(s)u(s)$ is differentiable and we have
$$\textstyle
\frac{d}{ds}g(s)=\frac{d}{dr}[G(r)(u(s))]\big|_{r=s}+G(s)(\frac{d}{ds}u(s)).
$$
\end{lem}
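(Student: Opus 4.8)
The plan is to estimate the difference quotient of $g$ directly and to split it in the usual Leibniz fashion. For $s\in[a,b]$ and small $h\neq0$ with $s+h\in[a,b]$ I would write
$$
\frac{g(s+h)-g(s)}{h}=G(s+h)\,\frac{u(s+h)-u(s)}{h}+\frac{\big(G(s+h)-G(s)\big)u(s)}{h}.
$$
By the assumed strong differentiability of $G$ the second summand converges, as $h\to0$, to $\frac{d}{dr}[G(r)u(s)]\big|_{r=s}$. It thus remains to show that the first summand converges to $G(s)\big(\frac{d}{ds}u(s)\big)$, and this is precisely where the barrelledness of $E$ enters.

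First I would establish that the family $\{G(s)\:;\:s\in[a,b]\}\subseteq L(E,F)$ is equicontinuous. Since $G$ is strongly differentiable, for every $x\in E$ the map $s\mapsto G(s)x$ is continuous on the compact interval $[a,b]$, so its range is bounded in $F$; hence the family is pointwise bounded. As $E$ is barrelled, the Banach--Steinhaus theorem (cf.~\cite{Jarchow} or \cite{MeiseVogtEnglisch}) yields equicontinuity: for every continuous seminorm $q$ on $F$ there is a continuous seminorm $p$ on $E$ with $q(G(s)x)\leqslant p(x)$ for all $s\in[a,b]$ and all $x\in E$.

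Writing $v_h=\frac{u(s+h)-u(s)}{h}$ and $v=\frac{d}{ds}u(s)$, the remaining summand is then treated via
$$
G(s+h)v_h-G(s)v=G(s+h)(v_h-v)+\big(G(s+h)-G(s)\big)v.
$$
Given a continuous seminorm $q$ on $F$, the equicontinuity bounds the first term on the right by $p(v_h-v)$, which tends to $0$ because $v_h\to v$ in $E$ by the differentiability of $u$; the second term tends to $0$ by the strong continuity of $G$ at $s$, which is a consequence of strong differentiability. Combining the two summands yields the asserted formula, and at the endpoints $a$ and $b$ one argues with the appropriate one-sided difference quotients.

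The main obstacle -- and the reason for requiring $E$ barrelled -- is exactly this passage to the limit in $G(s+h)v_h$: the pointwise convergence $G(s+h)\to G(s)$ together with $v_h\to v$ does not by itself give $G(s+h)v_h\to G(s)v$ in a general locally convex space, and one genuinely needs the uniform control furnished by the equicontinuity of $\{G(s)\:;\:s\in[a,b]\}$, which Banach--Steinhaus provides only thanks to barrelledness.
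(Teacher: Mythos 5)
Your proof is correct, but it takes a genuinely different route from the paper's. You split the difference quotient as $G(s+h)\frac{u(s+h)-u(s)}{h}+\frac{(G(s+h)-G(s))u(s)}{h}$, so the $G$-difference quotient acts on the \emph{fixed} vector $u(s)$ and converges by the bare definition of strong differentiability; you then apply Banach--Steinhaus to the family $\{G(s)\:;\:s\in[a,b]\}$ itself (pointwise bounded since each orbit $G(\cdot)x$ is continuous on a compact interval), and the resulting equicontinuity handles the term $G(s+h)(v_h-v)$, with $(G(s+h)-G(s))v\to0$ following from the continuity of $G(\cdot)v$. The paper instead uses the opposite decomposition, $\big(\frac{G(s+h_n)-G(s)}{h_n}\big)u(s+h_n)+G(s)\big(\frac{u(s+h_n)-u(s)}{h_n}\big)$, in which the $G$-difference quotient acts on the \emph{moving} point $u(s+h_n)$; this forces an application of Banach--Steinhaus to the difference-quotient operators $H_n=\frac{G(s+h_n)-G(s)}{h_n}$, using the stronger conclusion that a pointwise convergent, pointwise bounded sequence on a barrelled space has continuous limit and converges uniformly on precompact sets, combined with compactness of $u([a,b])$. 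Your version is arguably more elementary: it needs only the basic equicontinuity form of Banach--Steinhaus and avoids the uniform-convergence-on-precompacta refinement, at the price of one extra (routine) splitting of the remaining term; the paper's version stays closer in shape to the classical product-rule proof and yields as a by-product that the pointwise derivative $x\mapsto\frac{d}{dr}G(r)x|_{r=s}$ is itself a continuous operator. In both arguments barrelledness of $E$ enters only through Banach--Steinhaus, and in both the stated completeness of $E$ and $F$ plays no essential role; your remark about one-sided quotients at the endpoints is also fine.
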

\begin{proof} Let us fix $s\in[a,b]$ and $(h_n)_{n\in\mathbb{N}}\subseteq\mathbb{R}$ with $h_n\rightarrow0$ and $s+h_n\in [a,b]$ for all $n\in\mathbb{N}$. As in the proof of the classical product rule we compute
$$\textstyle
\frac{1}{h_n}(g(s+h_n)-g(s))=\big(\frac{G(s+h_n)-G(s)}{h_n}\big)u(s+h_n) +G(s)\big(\frac{u(s+h_n)-u(s)}{h_n}\big).
$$
For $n\rightarrow\infty$ the second summand converges to $G(s)(\frac{d}{ds}u(s))$, since $u$ is differentiable and $G$ is continuous, i.e.~$G(s)\in L(E,F)$ holds. We define $(H_n)_{n\in\mathbb{N}}\subseteq L(E,F)$ via $H_n(x)=\frac{G(s+h_n)-G(s)}{h_n}x$ for $x\in E$. By our assumptions $H_n\rightarrow H$ holds pointwise for $n\rightarrow\infty$ with $H\colon E\rightarrow F$, $Hx=\frac{d}{dr}G(r)x|_{r=s}$ for $x\in E$. Moreover, $(H_n)_{n\in\mathbb{N}}$ is pointwise bounded. Thus the Banach-Steinhaus theorem \cite[11.1.3]{Jarchow} implies $H\in L(E,F)$ and $H_n\rightarrow H$ uniformly on precompact subsets of $E$. As $u([a,b])\subseteq E$ is compact, for every continuous seminorm $p$ on $F$ and every $\epsilon>0$ there exists $N_1$ such that for all $n\geqslant N_1$ the estimate $p((H_n-H)(u(s+h_n)))<\epsilon/2$ is valid. For a given continuous seminorm $p$ on $F$ and $\epsilon>0$ we select $N_1$ as above. As $H$ is continuous, there exists a continuous seminorm $q$ on $E$ with $p(H(u(s+h_n)-u(s)))\leqslant q(
u(s+h_n)-u(s))$ for all $n$. Since $u$ is continuous, there exists $N_2\in\mathbb{N}$ such that $q(u(s+h_n)-u(s))<\epsilon/2$ holds for all $n\geqslant N_2$. With $N=\max(N_1,N_2)$ we get
\begin{eqnarray*}
p(H_n(u(s+h_n))-H(u(s)))&\leqslant&p((H_n-H)(u(s+h_n)))+p(H(u(s+h_n)-u(s)))\\
& \leqslant& p((H_n-H)(u(s+h_n)))+q(u(s+h_n)-u(s))\\
&\leqslant&\epsilon/2+\epsilon/2=\epsilon
\end{eqnarray*}
for $n\geqslant N$. This shows that the first summand in the first equation of this proof converges to $H(u(s))=\frac{d}{dr}[G(r)(u(s))]\big|_{r=s}$. Since $(h_n)_{n\in\mathbb{N}}$ and $s$ were arbitrary this finishes the proof.
\end{proof}

\vspace{-10pt}
Next we use Lemma \ref{l2} to show that a generator which can be extended continuously to the whole space has already the whole space as domain. For Banach spaces this is well-known \cite[II.1.5]{EngelNagelOne}; in the case of a locally convex space we however have to assume that the underlying space is barrelled.

\begin{lem}\label{l3} Let $E$ be a complete locally convex space and $S=(S(t))_{t\geqslant0}$ be a strongly continuous and exponentially equicontinuous semigroup with generator $(B,D(B))$. Let $C\in L(E)$ be such that $C|_{D(B)}=B$. Then $(C,E)$ is the generator of a strongly continuous and exponentially equicontinuous semigroup $R=(R(t))_{t\geqslant0}$. If $E$ is barrelled, then $R=S$ and thus $(B,D(B))=(C,E)$ holds.
\end{lem}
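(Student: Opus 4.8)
The plan is to avoid building a new semigroup by hand and instead to identify $C$ with $B$ directly through the resolvent of $B$. Since $S$ is exponentially equicontinuous, fix $\omega\in\mathbb{R}$ as in the definition and pick any real $\lambda>\omega$. Because $\{e^{-\omega t}S(t)\}$ is equicontinuous, the rescaled semigroup falls under the setting of K\={o}mura \cite[Section 1]{Komura}, and for such $\lambda$ the Laplace transform $R(\lambda,B)x=\int_0^\infty e^{-\lambda t}S(t)x\,dt$ converges for every $x\in E$ (in each seminorm the integrand is dominated by $M\,e^{(\omega-\lambda)t}q(x)$), defines an operator $R(\lambda,B)\in L(E)$, and equals $(\lambda-B)^{-1}$; in particular $\lambda-B\colon D(B)\to E$ is a bijection and $R(\lambda,B)$ maps $E$ bijectively onto $D(B)$, cf.~also \cite{Choe}.

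First I would check that $R(\lambda,B)$ is a right inverse of $\lambda-C$. For $y\in E$ we have $R(\lambda,B)y\in D(B)$, hence $CR(\lambda,B)y=BR(\lambda,B)y=\lambda R(\lambda,B)y-y$, so $(\lambda-C)R(\lambda,B)=\id$ on $E$. Next I would prove that the two operators commute: for $x\in D(B)$ one computes $CR(\lambda,B)x=BR(\lambda,B)x=R(\lambda,B)Bx=R(\lambda,B)Cx$, using that $R(\lambda,B)x\in D(B)$, that the resolvent commutes with $B$ on $D(B)$, and that $Cx=Bx$ there. Since $x\mapsto CR(\lambda,B)x$ and $x\mapsto R(\lambda,B)Cx$ are continuous on $E$ and agree on the dense subspace $D(B)$, they agree on all of $E$. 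Consequently $R(\lambda,B)(\lambda-C)=\lambda R(\lambda,B)-R(\lambda,B)C=\lambda R(\lambda,B)-CR(\lambda,B)=(\lambda-C)R(\lambda,B)=\id$, so $R(\lambda,B)$ is a left inverse as well.

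Thus $\lambda-C\colon E\to E$ is a bijection with inverse $R(\lambda,B)$. But $R(\lambda,B)$ has range $D(B)$, while as the inverse of a bijection of $E$ its range is $E$; hence $D(B)=E$ and therefore $C=B$. In particular $(C,E)=(B,D(B))$ is the generator of $S$, so putting $R:=S$ settles the first assertion and makes $R=S$ trivially true. This route yields the full conclusion — including $D(B)=E$ — without invoking barrelledness; the one point requiring care is the commutation step, since $C$ need \emph{not} map $D(B)$ into $D(B)$, so the usual lemma that a continuous operator commutes with the resolvent does not apply verbatim and one genuinely has to argue by density and continuity.

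For completeness I would also indicate the more dynamical argument that the stated hypotheses and the preceding Lemma \ref{l2} are tailored to, staying closer to \cite[II.1.5]{EngelNagelOne}: one first produces $R$ as $R(t)=\exp(tC)$, and then, for fixed $t$ and $x\in D(B)$, differentiates $s\mapsto R(t-s)S(s)x$ via the product rule of Lemma \ref{l2} (this is where barrelledness of $E$ enters), obtaining derivative $0$ from $Cx=Bx$ on $D(B)$ together with the commutation of $C$ and $R$; evaluation at $s=0$ and $s=t$ gives $R(t)x=S(t)x$ on the dense set $D(B)$, whence $R=S$. In this variant the real obstacle is the construction of $R$ itself, as convergence and exponential equicontinuity of $\exp(tC)$ do not follow from continuity of $C$ alone in a merely complete locally convex space and would have to be extracted from the interplay between $C|_{D(B)}=B$ and the exponential equicontinuity of $S$; the resolvent approach above sidesteps this difficulty entirely.
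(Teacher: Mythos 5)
Your proof is correct, and it takes a genuinely different --- and in fact sharper --- route than the paper. The paper's own proof starts the same way: from Choe \cite[Corollary 4.5 and condition (G)]{Choe} it gets $(\lambda-B)^{-1}\in L(E)$, verifies $(\lambda-C)(\lambda-B)^{-1}=\id_E$, and extends $(\lambda-B)^{-1}(\lambda-C)=\id_{D(B)}$ to all of $E$ by density and continuity, i.e.\ it obtains $(\lambda-C)^{-1}=(\lambda-B)^{-1}$ exactly as you do. But the paper uses this identity only to check Choe's generation condition for $(C,E)$ (so that $R$ exists), and then proves $R=S$ dynamically: for $x\in D(B)$ it differentiates $s\mapsto R(t-s)S(s)x$ via the product rule of Lemma \ref{l2} --- this is where barrelledness enters, through the Banach--Steinhaus theorem --- and integrates the vanishing derivative over $[0,t]$. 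Your decisive extra observation is the range argument: $(\lambda-B)^{-1}$, being a two-sided inverse of the bijection $\lambda-C\colon E\to E$, is surjective onto $E$, while by construction its range is $D(B)$; hence $D(B)=E$ and $C=B$ outright. This one line, which the paper stops just short of, eliminates barrelledness from the lemma (and would do the same for the application in Theorem \ref{t2}.(ii)), since one may then take $R:=S$. Two minor remarks. First, your commutation step is correct but dispensable: the left-inverse identity follows directly from $(\lambda-C)x=(\lambda-B)x$ for $x\in D(B)$ plus density, which also disposes of your (legitimate) concern that $C$ need not map $D(B)$ into $D(B)$. Second, since the lemma quantifies $R$ existentially, setting $R:=S$ does settle the statement as written; only if one insists that the specific semigroup produced by Choe's theorem from $(C,E)$ coincides with $S$ does one still need uniqueness of the semigroup for a given generator, which is precisely what the paper's Lemma \ref{l2} computation supplies under barrelledness. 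Your closing diagnosis of the dynamical variant is accurate as well: the paper does not construct $R$ as $\exp(tC)$ but extracts it from the inherited resolvent equicontinuity $(\lambda-C)^{-k}=(\lambda-B)^{-k}$ via Choe's characterization.
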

\vspace{-20pt}
\begin{proof} From Choe \cite[Corollary 4.5 and condition (G) on p.~295]{Choe} it follows that there exists $\omega\geqslant0$ such that $\{(\lambda-\omega)^k(\lambda-B)^{-k}\:;\:\lambda>\omega,k=0,1,2,\dots\}\subseteq L(E)$ is equicontinuous. In particular, $\lambda-B\colon D(B)\rightarrow E$ is invertible with $(\lambda-B)^{-1}\in L(E)$. By our assumptions, we know that $\lambda-C$ belongs to $L(E)$ and that $(\lambda-C)|_{D(B)}=\lambda-B$ holds. It follows $(\lambda-C)(\lambda-B)^{-1}=\id_E$ and $(\lambda-B)^{-1}(\lambda-C)=\id_{D(B)}$. Since $(\lambda-B)^{-1}$ and $\lambda-C$ are in $L(E)$ and $D(B)$ is dense in $E$, we get that $\lambda-C\colon E\rightarrow E$ is invertible with the continuous inverse $(\lambda-C)^{-1}=(\lambda-B)^{-1}$. Thus also $(\lambda-C)^{-k}=(\lambda-B)^{-k}$ for $k=0,1,2,\dots$ holds. By \cite[Corollary 4.5 and condition (G) on p.~295]{Choe} it follows that $(C,E)$ is the generator of a strongly continuous and exponentially equicontinuous semigroup $R=(R(t))_{t\geqslant0}$.
\smallskip
\\In order to show $R=S$ we fix $t>0$ and $x\in D(B)$. We define the map $g\colon[0,t]\rightarrow E$, $s\mapsto R(t-s)S(s)x$. By Lemma \ref{l2} it follows (with $u(s)=S(s)x$ and $G=R(t-\cdot)$) that $g$ is differentiable. By \cite[Proposition 1.2]{Komura} and since $B$ generates $S$ and $C$ generates $R$ we may compute
\begin{eqnarray*}
\textstyle\frac{d}{ds}g(s)&=&\textstyle\frac{d}{dr}[R(t-r)(S(s)x)]\big|_{r=s}+R(t-s)(\frac{d}{ds}S(s)x)\\
&=&-R(t-r)CS(s)x\big|_{r=s}+R(t-s)S(s)Bx\\
&=&-R(t-s)S(s)Bx+R(t-s)S(s)Bx
\end{eqnarray*}
where we used $CS(s)x=BS(s)x=S(s)x$ since $S(s)x$ and $x$ belong to $D(B)$. Now we compute
$$\textstyle
S(t)x-R(t)x=\int_0^t\frac{d}{ds}[R(t-s)S(s)x]ds=\int_0^tg(s)ds=0.
$$
Since $S(t)-R(t)\in L(E)$ and $D(B)\subseteq E$ is dense we get $S(t)=R(t)$ on $E$. As $t>0$ was arbitrary, $S=R$ follows. Consequently, $(B,D(B))=(C,E)$.
\end{proof}

\vspace{-10pt}
Let us remark that the two lemmas above are also valid, if we only assume that the underlying spaces are sequentially complete. Thus, the statement of this section's main theorem below remains true if we define $X_{-\infty}$ to be the sequential completion (instead of the completion) of the inductive limit $X$.

\begin{thm}\label{t2} Consider a Sobolev tower as in Section \ref{S1} and the associated universal extrapolation space as defined above.\vspace{-5pt}
\begin{compactitem}
\item[(i)] $T_{-\infty}$ is a strongly continuous and exponentially equicontinuous semigroup on $X_{-\infty}$. Its generator $(G,D(G))$ satisfies $X\subseteq D(G)\subseteq X_{-\infty}$ and $A_{-\infty}|_{D(G)}=G$. In particular, $G\colon (D(G),\tau_{-\infty})\rightarrow X_{-\infty}$ is continuous.\vspace{3pt}
\item[(ii)] If $X_{-\infty}$ is barrelled, then $D(G)=X_{-\infty}$ and consequently $G=A_{-\infty}$ holds.\vspace{3pt}
\item[(iii)] The operator $A_{-\infty}\colon X_{-\infty}\rightarrow X_{-\infty}$ is an isomorphism. Its inverse $A_{-\infty}^{-1}$ is the unique continuous extension of the map $A^{-1}\colon X\rightarrow X$ induced by the inverses $A_{n}^{-1}\colon X_n\rightarrow X_{n+1}$ of the operators $A_n$.\vspace{3pt}
\end{compactitem}
\end{thm}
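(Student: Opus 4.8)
The plan is to treat the three assertions in turn, establishing first that $T_{-\infty}$ is a strongly continuous and exponentially equicontinuous semigroup, then identifying its generator, and finally checking the isomorphism property of $A_{-\infty}$. For (i) I would begin with the semigroup laws. On $X$ one has $T(t+s)=T(t)T(s)$ and $T(0)=\id_X$, since the same holds for each $T_n$ and the induced maps are consistent by Theorem \ref{t1}.(iv); as $X$ is dense in $X_{-\infty}$ and the $T_{-\infty}(t)$ are continuous, these identities pass to $X_{-\infty}$. Strong continuity of $T$ on $X$ is immediate: any $x\in X$ lies in some $X_n$, where $t\mapsto T_n(t)x$ is continuous into $X_n$ and hence into $X$. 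The crucial point is exponential equicontinuity. Here I would exploit that, at each step of the construction in the proof of Theorem \ref{t1}, the constant $\omega$ is inherited unchanged, so \emph{all} semigroups $T_n$, $n\in\mathbb{Z}$, are exponentially equicontinuous with one common $\omega$. Given a continuous seminorm $p$ on $X$, I would form the envelope $\tilde{p}(x)=\sup_{t\geq0}e^{-\omega t}p(T(t)x)$. For $x\in X_n$ one has $T(t)x=T_n(t)x\in X_n$ and $p|_{X_n}$ is continuous on $X_n$, so exponential equicontinuity of $T_n$ bounds $\tilde{p}$ on $X_n$ by a continuous seminorm; since a seminorm on $\ind{n}X_n$ is continuous exactly when its restriction to every step is, $\tilde{p}$ is continuous on $X$. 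This yields equicontinuity of $\{e^{-\omega t}T(t)\}$ on $X$, which extends to $X_{-\infty}$. Combining this with strong continuity on the dense subspace $X$, a standard three-$\epsilon$ argument (using the local uniform bound furnished by equicontinuity) gives strong continuity of $T_{-\infty}$ on all of $X_{-\infty}$.

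Still for (i), to locate the generator $(G,D(G))$ I would first show $X\subseteq D(G)$ with $Gx=A_{-\infty}x$ for $x\in X$: writing $x\in X_m$, we have $x\in X_m=D(A_{m-1})$, hence $\frac1t(T_{m-1}(t)x-x)\to A_{m-1}x=Ax$ in $X_{m-1}$ and thus in $X_{-\infty}$, so $Gx=Ax=A_{-\infty}x$. To upgrade this to $G=A_{-\infty}|_{D(G)}$ on all of $D(G)$, I would argue that $X$ is a core: $X$ is dense in $X_{-\infty}$ and $T$-invariant (that is, $T(t)X\subseteq X$), so Lemma \ref{l1}.(ii) makes $X$ dense in $(D(G),\tau_G)$ for the graph topology. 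Given $x\in D(G)$, pick a net $x_i\in X$ with $x_i\to x$ and $Gx_i\to Gx$; since $Gx_i=A_{-\infty}x_i$ and $A_{-\infty}$ is continuous, $A_{-\infty}x_i\to A_{-\infty}x$, and uniqueness of limits forces $Gx=A_{-\infty}x$. Continuity of $G$ then follows from that of $A_{-\infty}$.

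Assertion (ii) is then immediate from Lemma \ref{l3} applied to $E=X_{-\infty}$, $S=T_{-\infty}$ with generator $(G,D(G))$, and $C=A_{-\infty}\in L(X_{-\infty})$: by (i) we have $A_{-\infty}|_{D(G)}=G$, so the hypothesis $C|_{D(B)}=B$ is met, and barrelledness of $X_{-\infty}$ forces $(G,D(G))=(A_{-\infty},X_{-\infty})$. For (iii) I would first verify that the inverses $A_n^{-1}\colon X_n\to X_{n+1}$, which by Corollary \ref{c1}.(i) are mutually consistent restrictions, induce via the universal property a continuous operator $A^{-1}\colon X\to X$ satisfying $AA^{-1}=A^{-1}A=\id_X$. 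Its unique continuous extension $B$ to $X_{-\infty}$ then satisfies $A_{-\infty}B=BA_{-\infty}=\id$, since both sides are continuous and agree with the identity on the dense subspace $X$. Hence $A_{-\infty}$ is a bijection with continuous inverse $A_{-\infty}^{-1}=B$, i.e.\ an isomorphism. I expect the main obstacle to be the transfer of exponential equicontinuity to the inductive limit in (i)---the decisive inputs being the common constant $\omega$ across all steps together with the step-wise characterization of continuous seminorms on $\ind{n}X_n$---closely followed by the passage from $G=A_{-\infty}$ on $X$ to equality on all of $D(G)$, for which the core property from Lemma \ref{l1}.(ii) is essential.
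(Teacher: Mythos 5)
Your proposal is correct and follows essentially the same route as the paper: semigroup law and strong continuity on the dense subspace $X$, a common $\omega$ for all $T_n$ extracted from the proof of Theorem \ref{t1}, identification $G=A_{-\infty}|_{D(G)}$ via the core property from Lemma \ref{l1}.(ii) and density in the graph topology, Lemma \ref{l3} for (ii), and the universal property plus dense-extension argument for (iii). The only cosmetic deviation is the equicontinuity step, where you use the envelope seminorm $\tilde{p}(x)=\sup_{t\geqslant0}e^{-\omega t}p(T(t)x)$ together with the fact that a seminorm on $\ind{n}X_n$ is continuous iff its restriction to each step is, whereas the paper argues dually with absolutely convex $0$-neighborhoods $V=\abs\big(\Bigcup{n}V_n\big)$ — two equivalent descriptions of the inductive limit topology.
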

\vspace{-20pt}
\begin{proof} (i) $T=(T(t))_{t\geqslant0}$ and thus also $T_{-\infty}$ satisfies the semigroup property. For $x\in X$ we select $n$ such that $x\in X_n$. Then $T(\cdot)x\colon[0,\infty[\:\rightarrow X_n$ is continuous and since the inclusion $X_n\hookrightarrow X$ is continuous it follows that $T$ is strongly continuous. We show that $T$ is exponentially equicontinuous. The proof of Theorem \ref{t1} shows that there is $\omega\in\mathbb{R}$ such that $\{\exp(\omega t)T_n(t)\:;\:t\geqslant0\}$ is equicontinuous for any $n\in\mathbb{Z}$. Denote by $S=\exp(\omega\,\cdot\,)T$ resp.~$S_n=\exp(\omega\,\cdot\,)T_n$ the rescaled semigroups. Let $U$ be a 0-neighborhood in $X$. W.l.o.g.~we may assume that $U$ is absolutely convex. By \cite[p.~43]{Bierstedt1988} this implies that $U\cap X_n$ is a 0-neighborhood in $X_n$ for all $n$. Since every $S_n$ is equicontinuous there exist absolutely convex 0-neighborhoods $V_n$ in $X_n$ such that $[S_n(t)](V_n)\subseteq U\cap X_n$ holds for every $n$ and every $t\geqslant0$. We 
put $V=\abs(\,\Bigcup{n\in\mathbb{N}}V_n)\subseteq X$ which is a 0-neighborhood in $X$, see \cite[p.~43]{Bierstedt1988}. But then we have
$$
[S(t)](V)=[S(t)]\big(\abs(\,\Bigcup{n\in\mathbb{N}}V_n)\big)=\abs\big(\Bigcup{n\in\mathbb{N}}\,[S_n(t)](V_n)\big)\subseteq \abs\big(\Bigcup{n\in\mathbb{N}}\,(U\cap X_n)\big)=U
$$
for all $t\geqslant 0$, i.e.~$S$ is equicontinuous by \cite[p.~156]{Jarchow} and whence $T$ is exponentially equicontinuous. As $X\subseteq X_{-\infty}$ is dense, it firstly follows that also $T_{-\infty}$ is exponentially equicontinuous. Secondly, the density, the exponential equicontinuity and the strong continuity of $T$ imply that $T_{-\infty}$ is strongly continuous. Denote by $(G,D(G))$ the generator of $T_{-\infty}$. For $x\in X$ we select $n$ such that $x\in X_n=D(A_{n-1})\subseteq X_{n-1}$ holds. Then
$$
Ax=A_{n-1}x=\lim_{t\searrow0}{\textstyle\frac{T_n(t)x-x}{t}}=\lim_{t\searrow0}{\textstyle\frac{T(t)x-x}{t}}
$$
follows. In particular the limits exist in $X$ and thus $x\in D(G)$ with $Ax=Gx$ holds. Thus, $X\subseteq D(G)$ and $A=G|_{X}$ are true. As $X$ is $T_{-\infty}$-invariant and dense in $(D(G),\tau_{-\infty}|_{D(G)})$, Lemma \ref{l1} implies that $X\subseteq (D(G),\tau_G)$ is dense. Since the inclusion $(D(G),\tau_G)\rightarrow X_{-\infty}$ is continuous, $A_{-\infty}|_{D(G)}\colon (D(G),\tau_G)\rightarrow X_{-\infty}$ is continuous. $A_{-\infty}|_X=A=G|_{X}$ thus implies by the density of $X$ that $A_{-\infty}|_{D(G)}=G$. It follows that $G=A_{-\infty}|_{D(G)}\colon (D(G),\tau_{-\infty})\rightarrow X_{-\infty}$ is continuous.
\medskip
\\(ii) The conclusion follows immediately from (i) and Lemma \ref{l3}.
\medskip
\\(iii) The maps $A^{-1}_n\colon X_n\rightarrow X_{n+1}$ induce a map $A^{-1}\colon X\rightarrow X$ which is continuous by the universal property of the inductive limit. Given $x\in X$ select $n$ such that $x\in X_{n}$. Then $A^{-1}Ax=A^{-1}A_{n-1}x=A^{-1}_{n-1}A_{n-1}x=x$. Conversely, $AA^{-1}x=AA^{-1}_nx=A_nA^{-1}_nx=x$. Thus, $A\colon X\rightarrow X$ is an isomorphism with inverse $A^{-1}$. It follows that $A_{-\infty}\colon X_{-\infty}\rightarrow X_{-\infty}$ is also an isomorphism and that its inverse $A_{-\infty}^{-1}$ is the unique extension of $A^{-1}$ to $X_{-\infty}$.
\end{proof}

\vspace{-10pt}
On a first glance, the assumption in Theorem \ref{t2}.(ii) seems to make the result rather unhandy as one has first to compute $X_{-\infty}$ and then to check if it is barrelled. Let us thus mention that the latter holds at least if $X=\ind{n}X_n$ is barrelled, see \cite[4.2.1]{BPC}. This in turn is valid if all steps $X_n$ are barrelled, see \cite[4.2.6]{BPC}. In particular, in the setting of classical Sobolev towers the extra assumption of Theorem \ref{t2}.(ii) can be omitted, see Theorem \ref{thmA}.A.
\smallskip
\\Let us add that even in the rather pleasant situation of Theorem \ref{thmA}.A it is not clear if the inductive limit $X$ is complete, cf.~\cite{Bierstedt1988} and its list of references for more information on the completeness of LB-spaces. We now give an example for the situation of Theorem \ref{thmA}.A.

\begin{ex}\label{ex2} Let $X_0=\ell^2$. For $t\geqslant0$ and $x\in\ell^2$ we put $T_0(t)x=(e^{tq_j}x_j)_{j\in\mathbb{N}}$ where $(q_j)_{j\in\mathbb{N}}\subseteq\mathbb{C}$ is a sequence with $\sup_{j\geqslant0}\Re(q_j)<0$ and $|q_j|\geqslant1$ for all $j\in\mathbb{N}$. Then, $T_0=(T_0(t))_{t\geqslant0}$ defines a strongly continuous semigroup on $\ell^2$ whose generator $A_0$ is given by $A_0x=(q_jx_j)_{j\in\mathbb{N}}$ defined on its maximal domain. The Sobolev tower is given by the following data. We have
$$
X_n=\big\{x\in\mathbb{C}^{\mathbb{N}}\:;\:\|x\|_n=\big(\Bigsum{j=0}{\infty} (|q_j|^n|x_j|)^2\big)^{1/2}<\infty\big\},
$$
$T_n$ and $A_n$ are given by the formulas for $T$ above with the appropriate domains. We put $v_{n,j}=|q_j|^n$ and $v_n=(v_{n,j})_{j\in\mathbb{N}}$. Then we have $X_n=\ell^2(v_n)$. Since $v_{n,j}=|q_j|^{n}\leqslant |q_j|^{n+1}=v_{n+1,j}$ for any $j\in\mathbb{N}$ and $n\leqslant-1$ and taking into account the inverse ordering of our index set we obtain that $\ind{n\leqslant0}X_n=\ind{n\leqslant0}\ell^2(v_n)=k^2(V)$ is the K\"othe coechelon space of order two in the notation of \cite[Section 1]{BMS1982a} for $V=(v_n)_{n\in\mathbb{N}}$. Since the latter space is complete, see \cite[2.3]{BMS1982a}, it follows $X_{-\infty}=k^2(V)$. Let us add that in \cite[2.3]{BMS1982a} a fundamental system of seminorms for the topology of $k^2(V)$ is explicitly given.
\end{ex}

%%%%%%%%%%%%%%%%%%%%%%%%%%%%%%%%%%%%%%%%%%%%%%%%%%%%%%%%%%%%%%%%%%%%%%%%%%%%%%%%%%%%%%%
%                                                                                     %
%  5 The dual construction: Universal Interpolation Spaces                            %
%                                                                                     %
%%%%%%%%%%%%%%%%%%%%%%%%%%%%%%%%%%%%%%%%%%%%%%%%%%%%%%%%%%%%%%%%%%%%%%%%%%%%%%%%%%%%%%%
\vspace{-5pt}
\section{The dual construction: Universal Interpolation Spaces}\label{Inter}
\vspace{-10pt}
Dual to the construction of Section \ref{Extra} one can consider the projective limit of the spaces $X_n$ in a given Sobolev tower. Let us call $(X_{\infty},\tau_{\infty})=\proj{n}(X_n,\tau_n)$ the \textit{universal interpolation space} associated with a Sobolev tower as in Section \ref{S1}. This time the ordering of the index set is just the usual one of the integers. As in Section \ref{Extra}, we may restrict our attention to the spaces with non-negative indices and appropriate rescalings yield equivalent spectra and thus isomorphic projective limits. Algebraically, we have $X_{\infty}=\Bigcap{n\geqslant0}X_n=\Bigcap{n\geqslant0}D(A^n)$, cf.~Remark \ref{r1}.(iii). The topology $\tau_{\infty}$ of $X_{\infty}$ is the coarsest locally convex topology which makes all inclusions $X_{\infty}\rightarrow X_n$ continuous. In contrast to the inductive case this topology always exists and in addition it is complete as the $X_n$ are all complete. Copying the proof of \cite[II.1.8]{EngelNagelOne} verbatim, it follows 
that $X_{\infty}=\proj{n}X_n$ is a reduced projective limit in the sense of Floret, Wloka \cite[p.~143]{FloretWloka}, i.e.~$X_{\infty}\subseteq X_n$ is dense for any $n\geqslant0$. 
\smallskip
\\On $X_{\infty}$ we define $T_{\infty}=(T_{\infty}(t))_{t\geqslant0}$ via $T_{\infty}(t)x=T_0(t)x$ for $t\geqslant0$ and $A_{\infty}$ via $A_{\infty}x=A_0x$ for $x\in X_{\infty}$. Both maps are well-defined, $T_{\infty}$ satisfies the semigroup property and $A_{\infty}\colon X_{\infty}\rightarrow X_{\infty}$ is continuous by the universal property of the projective limit. We define $A_{\infty}^{-1}$ via $A_{\infty}^{-1}x=A_0^{-1}x$. Again by the universal property, $A_{\infty}^{-1}$ is continuous and it follows that $A_{\infty}$ is an isomorphism with inverse $A_{\infty}^{-1}$. We have the following result dual to Theorem \ref{t2}.

\begin{thm}\label{t3} Consider a Sobolev tower as in Section \ref{S1} and let $X_{\infty}$ be the associated universal interpolation space. $X_{\infty}=\proj{n}X_n$ is a reduced projective limit and $X_{\infty}$ is complete.  $T_{\infty}$ is a strongly continuous and exponentially equicontinuous semigroup on $X_{\infty}$ with generator $(A_{\infty},X_{\infty})$. In particular, $A_{\infty}\colon X_{\infty}\rightarrow X_{\infty}$ is an isomorphism; its inverse is given as the restriction of the inverse of $A_n$ on any step.
\end{thm}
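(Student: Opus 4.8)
I would begin by recording that the first two assertions are already available from the discussion preceding the statement: copying the proof of \cite[II.1.8]{EngelNagelOne} verbatim shows that $X_{\infty}=\proj{n}X_n$ is reduced, and completeness of $X_{\infty}$ follows because a projective limit of complete spaces is complete. That discussion also establishes that $A_{\infty}$ and $A_{\infty}^{-1}=A_0^{-1}|_{X_{\infty}}$ are continuous, hence $A_{\infty}$ is an isomorphism; combining this with Corollary \ref{c1}.(i), which says each $A_n^{-1}$ is a restriction of $A_{n-1}^{-1}$, identifies $A_{\infty}^{-1}$ as the common restriction of every $A_n^{-1}$ to $X_{\infty}$. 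The real work that remains is therefore to verify the semigroup properties of $T_{\infty}$ and to pin down its generator as $(A_{\infty},X_{\infty})$.

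The guiding principle I would use is that $\tau_{\infty}$ is the initial topology for the inclusions $X_{\infty}\hookrightarrow(X_n,\tau_n)$, so that $(p_{\gamma,n}|_{X_{\infty}})_{\gamma\in\Gamma,\,n\geqslant0}$ is a fundamental system of seminorms and every continuity or convergence statement into $X_{\infty}$ may be tested one step at a time. For exponential equicontinuity I would invoke the uniform $\omega\in\mathbb{R}$ furnished by the proof of Theorem \ref{t1}, for which $\{\exp(\omega t)T_n(t)\:;\:t\geqslant0\}$ is equicontinuous on each $X_n$; since $T_{\infty}(t)x=T_0(t)x=T_n(t)x$ for $x\in X_{\infty}\subseteq X_n$, the stepwise estimate $p_{\alpha,n}(T_n(t)x)\leqslant Me^{\omega t}p_{\beta,n}(x)$ transfers directly to $X_{\infty}$. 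Strong continuity is then immediate: for fixed $x$ the orbit $t\mapsto T_{\infty}(t)x=T_n(t)x$ is continuous into each $(X_n,\tau_n)$ by strong continuity of $T_n$, and hence into $X_{\infty}$ by the initial-topology property.

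To identify the generator $(B,D(B))$ of $T_{\infty}$ I would note first that $D(B)\subseteq X_{\infty}$ is automatic. For the converse I would fix $x\in X_{\infty}$, use $x\in X_{n+1}=D(A_n)$ for every $n\geqslant0$ together with the fact that $A_n$ restricts $A_0$, so that $A_nx=A_0x=A_{\infty}x$, and then exploit $\tfrac{1}{t}(T_n(t)x-x)\to A_nx$ in $(X_n,\tau_n)$ as $t\searrow0$. Testing against $p_{\gamma,n}$ and using $T_{\infty}(t)x=T_n(t)x$ yields $p_{\gamma,n}\big(\tfrac{1}{t}(T_{\infty}(t)x-x)-A_{\infty}x\big)\to0$ for all $\gamma$ and $n$, i.e.~$\tfrac{1}{t}(T_{\infty}(t)x-x)\to A_{\infty}x$ in $X_{\infty}$. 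This gives $x\in D(B)$ with $Bx=A_{\infty}x$, whence $D(B)=X_{\infty}$ and $B=A_{\infty}$.

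I do not expect a serious obstacle: the dual construction is genuinely gentler than its extrapolation counterpart. The single point that needs an argument is the equality $D(B)=X_{\infty}$, and, unlike in Theorem \ref{t2}.(ii), where barrelledness was indispensable, it comes for free here, because no completion intervenes and the initial topology lets membership in $D(B)$ be checked step by step, where it reduces to the inclusion $X_{\infty}\subseteq D(A_n)$. In effect the entire proof amounts to transporting the stepwise data through the universal property of the projective limit.
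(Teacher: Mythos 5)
Your proposal is correct and follows essentially the same route as the paper: the reducedness, completeness and the isomorphism property of $A_{\infty}$ are indeed settled in the discussion preceding the theorem, strong continuity and the identification of the generator $(A_{\infty},X_{\infty})$ are obtained exactly as in the paper by testing $\frac{1}{t}(T_{\infty}(t)x-x)\to A_nx=A_{\infty}x$ step by step, and your seminorm-based verification of exponential equicontinuity (using the uniform $\omega$ from the proof of Theorem \ref{t1}) is just the paper's neighborhood argument with $U$, $U_n$, $V_n$ rephrased through the fundamental system $(p_{\gamma,n}|_{X_{\infty}})$ of the initial topology. There is nothing to add.
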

\vspace{-20pt}
\begin{proof} $T_{\infty}$ is strongly continuous by the universal property of the projective limit. Let $U$ be a $0$-neighborhood in $X_{\infty}$. Then there exists $n\geqslant0$ and a $0$-neighborhood $U_n$ in $X_n$ such that $X_{\infty}\cap U_n\subseteq U$ holds, see \cite[2.6.1.(b)]{Jarchow}. We select $\omega\in\mathbb{R}$ such that $\{\exp(\omega t)T_n(t)\:;\:t\geqslant0\}$ is equicontinuous for any $n\in\mathbb{Z}$ and denote by $S_n=\exp(\omega\,\cdot\,)T_n$ for $n\in\mathbb{Z}\cup\{-\infty\}$ the rescaled semigroups. By the equicontinuity there exists a $0$-neighborhood $V_n$ in $X_n$ such that $S_n(t)(V_n)\subseteq U_n$ holds for all $t\geqslant0$. Put $V=X_{\infty}\cap V_n$ which is a $0$-neighborhood in $X_{\infty}$. Let $t\geqslant0$. On the one hand we have $S_{\infty}(t)(V)=S_n(t)(X_{\infty}\cap V_n)\subseteq S_n(t)(V_n)\subseteq U_n$. On the other hand, $S_{\infty}(t)(V)=S_{\infty}(t)(X_{\infty}\cap V_n)\subseteq S_{\infty}(t)(X_{\infty})\subseteq X_{\infty}$ holds and thus $S_{\infty}(t)(V) \subseteq X_{\infty}\cap U_n\subseteq U$ follows. As $t\geqslant0$ was arbitrary, $S_{\infty}$ is equicontinuous and consequently $T_{\infty}$ is exponentially equicontinuous.
\smallskip
\\Let $x\in X_{\infty}$ be arbitrary. Let $n\in\mathbb{N}$ be given. By definition, $x\in D(A_n)\subseteq X_n$ holds and we have
$$
\lim_{t\searrow0}{\textstyle\frac{T_{\infty}(t)x-x}{t}}=\lim_{t\searrow0}{\textstyle\frac{T_n(t)x-x}{t}}=A_nx=A_{\infty}x
$$ 
where the limit exists in the topology of $X_n$. Thus, $(A_{\infty},X_{\infty})$ is the generator of $T_{\infty}$.
\end{proof}

\vspace{-10pt}
Theorem \ref{t3} allows for the following variant: If the starting space $X_0$ is only sequentially complete, then by Remark \ref{r1}.(ii) the lower Sobolev tower can be constructed analogously to our approach in Section \ref{S1} and consists of sequentially complete spaces. We can thus define $X_{\infty}=\proj{n\geqslant0} X_n$ as the projective limit over the lower tower and obtain a sequentially complete locally convex space, cf.~\cite[3.3.7]{Jarchow}. $T_{\infty}$, $A_{\infty}$ and $A_{\infty}^{-1}$ can be defined then as in the setting above and the conclusions of Theorem \ref{t3} all remain valid.
\smallskip
\\As in the inductive case, we stated the classical situation of a strongly continuous semigroup acting on a Banach space already at the beginning, see Theorem \ref{thmB}.B.
\smallskip
\\Finally, we continue Example \ref{ex2} and compute the universal interpolation space. Note that the projective limit of the classical Sobolev spaces, $\mathcal{D}_{L_2}(\mathbb{R}^n)$, is investigated in \cite[\S{}14]{MeiseVogtEnglisch}.

\begin{ex}\label{ex3} In the notation of Example \ref{ex2} we define $B=(b_n)_{n\in\mathbb{N}}$ via $b_n=(b_{n,j})_{j\in\mathbb{N}}$, $b_{n,j}=|q_j|^n$. Note that $b_{n,j}=|q_j|^n\geqslant |q_j|^{n+1}=b_{n+1}$ and that we now use the usual ordering of $\mathbb{N}$ for our index set. Then, $X_{\infty}=\proj{n\geqslant0}X_n=\proj{n\geqslant0}\ell^2(b_n)=\lambda^2(B)$, i.e.~the universal interpolation space is the K\"othe echelon space of order two associated to the K\"othe matrix $B$; the topology of this Fr\'{e}chet space is given by the weighted seminorms $p_n(x)=[\Bigsum{j=1}{\infty}(b_{n,j}|x_j|)^2]^{1/2}$ for $x\in\lambda^2(B)$ and $n\geqslant0$.
\end{ex}

\begin{center}
\textbf{Acknowledgement}
\end{center}\vspace{-18pt}
The author likes to thank B.~Jacob for several fruitful discussions on the topic of this note. Moreover, he likes to thank B.~Farkas, who drew the author's attention to the study of inductive and projective limits of Sobolev towers. Finally, the author likes to thank the referee for his careful work and his valuable comments.

\setlength{\parskip}{0cm}

\small

\end{document}